


\documentclass[letterpaper, 10 pt, conference]{ieeeconf}  

\IEEEoverridecommandlockouts                              

\overrideIEEEmargins        

\usepackage{amsmath,amssymb,bm,bbm,mathrsfs,amscd}
\usepackage{amsfonts}
\usepackage{color}
\usepackage{dsfont}
\usepackage{graphicx}
\usepackage{epsfig}
\usepackage{subfigure}
\usepackage{algorithm}
\usepackage[noend]{algpseudocode}
\usepackage{tikz}
\usepackage{mathtools}

\newtheorem{theorem}{Theorem}

\newtheorem{proposition}{Proposition}
\newtheorem{lemma}{Lemma}

\newtheorem{remark}{Remark}

\newtheorem{standassumption}{Standing Assumption}





\usepackage[font=small,labelfont=bf]{caption}
\usepackage{subfigure}
\usepackage{graphicx}
\graphicspath{{Figures/}} 
\usepackage{acronym}
\usepackage{latexsym}
\usepackage{paralist}
\usepackage{wasysym}
\usepackage{xspace}
\usepackage{framed}
\usepackage{authblk}

\newcommand{\ssc}[1]{{\color{black}#1}}
\newcommand{\us}[1]{{\color{black}#1}}



\newcommand{\eps}{\varepsilon}
\newcommand{\from}{\colon}


\DeclareMathOperator{\diag}{diag}

\DeclareMathOperator{\dom}{dom}

\DeclareMathOperator{\Id}{Id}

\newcommand{\ce}{\mathtt{e}}


\newcommand{\bL}{\mathbf{L}}
\newcommand{\bI}{\mathbf{I}}

\newcommand{\bD}{\mathbf{D}}

\newcommand{\bW}{\mathbf{W}}

\newcommand{\xbold}{{\bf x}}

\newcommand{\ubold}{{\bf u}}

\newcommand{\mubold}{{\bm{\mu}}}
\newcommand{\lambdabold}{{\bm{\lambda}}}
\newcommand{\gammabold}{{\bm{\gamma}}}
\newcommand{\sigmabold}{{\bm{\sigma}}}
\newcommand{\taubold}{{\bm{\tau}}}


\renewcommand{\iff}{\Leftrightarrow}

\renewcommand{\emptyset}{\varnothing}

\newcommand{\scrA}{\mathcal{A}}
\newcommand{\scrB}{\mathcal{B}}

\newcommand{\scrF}{\mathcal{F}}

\newcommand{\scrG}{\mathcal{G}}

\newcommand{\scrM}{\mathcal{M}}
\newcommand{\scrN}{\mathcal{N}}

\newcommand{\setU}{\mathsf{U}}

\newcommand{\setX}{\mathsf{X}}

\renewcommand{\Pr}{\mathbb{P}}
\newcommand{\Ex}{\mathbb{E}}

\newcommand{\F}{{\mathbb{F}}}

\newcommand{\0}{\mathbf{0}}
\newcommand{\1}{\mathbf{1}}

\newcommand{\R}{\mathbb{R}}

\newcommand{\N}{\mathbb{N}}

\DeclareMathOperator{\zer}{zer}

\newcommand{\Zer}{\mathsf{Zer}}


\DeclareMathOperator{\prox}{prox}

\DeclarePairedDelimiter{\abs}{\lvert}{\rvert}

\DeclarePairedDelimiter{\inner}{\langle}{\rangle}
\DeclarePairedDelimiter{\norm}{\lVert}{\rVert}


\newacro{UBV}[UBV]{uniformly bounded variance}
\newacro{SO}[SO]{Stochastic Oracle}
\newacroplural{NE}[NE]{Nash equilibria}
\newacro{VI}{variational inequality}
\newacroplural{VI}[VIs]{variational inequalities}
\newacro{SVI}{stochastic variational inequality}
\newacroplural{SVI}[SVIs]{stochastic variational inequalities}
\newacro{i.i.d.}[i.i.d.]{independent and identically distributed}
\newacro{FOM}{First-order method}
\newacro{SFBF}{stochastic forward-backward-forward}
\newacro{FBF}{forward-backward-forward}
\newacro{SEG}{stochastic extragradient}
\newacro{SFB}{stochastic forward-backward}
\newacro{GNEP}{generalized Nash equilibrium problem}
\newacro{GNE}{generalized Nash equilibrium}
\newacro{RISFBF}{relaxed inertial stochastic forward-backward-forward}


\newcommand{\RR}{{\mathbb{R}}}
\newcommand{\NN}{{\mathbb{N}}}
\newcommand{\EE}{{\mathbb{E}}}

\newcommand{\JJ}{{\mathbb{J}}}

\newcommand{\mc}{\mathcal}

\newcommand{\op}{\operatorname}

\newcommand{\bs}{\boldsymbol}
\newcommand{\fineass}{\hfill\small$\square$}



\title{\LARGE \bf
A relaxed-inertial forward-backward-forward algorithm for Stochastic Generalized Nash equilibrium seeking
}
\date{\today}

\author{Shisheng Cui$^{1}$, Barbara Franci$^{2}$, Sergio Grammatico$^{2}$, Uday V. Shanbhag$^{1}$ and Mathias Staudigl$^{3}$
\thanks{*This work was partially supported by NWO under research projects OMEGA (613.001.702) and P2P-TALES (647.003.003), and by the ERC under research project COSMOS (802348). Mathias Staudigl's research benefited from the support of the FMJH Program PGMO and from the support of EDF.}
\thanks{$^{1}$ Shisheng Cui and Uday V. Shanbhag are with the Department of Industrial and Manufacturing Engineering, Pennsylvania State University, University Park, PA 16802, USA 
{\tt\small udaybag;suc256@psu.edu} }

\thanks{$^{2}$ Barbara Franci and Sergio Grammatico are with the Delft Center for System and Control, TU Delft, The Netherlands,
        {\tt\small b.franci-1;s.grammatico@tudelft.nl}}%
\thanks{$^{3}$ Mathias Staudigl is with the Department of Data Science and Knowledge Engineering, Maastricht University, The Netherlands,
{\tt\small m.staudigl@maastrichtuniversity.nl}}
}

\begin{document}
\maketitle
\thispagestyle{empty}
\pagestyle{empty}

\begin{abstract}
In this paper we propose a new operator splitting algorithm for distributed Nash equilibrium seeking under stochastic uncertainty, featuring relaxation and inertial effects. Our work is inspired by recent deterministic operator splitting methods, designed for solving structured monotone inclusion problems. 
The algorithm is derived from a forward-backward-forward scheme for solving structured monotone inclusion problems featuring a Lipschitz continuous and monotone game operator. To the best of our knowledge, this is the first distributed (generalized) Nash equilibrium seeking algorithm featuring acceleration techniques in stochastic Nash games without assuming cocoercivity. Numerical examples illustrate the effect of inertia and relaxation on the performance of our proposed algorithm.
\end{abstract}

\renewcommand{\sharp}{\gamma}
\acresetall
\allowdisplaybreaks

\section{Introduction}
\label{sec:intro}
A stochastic generalized Nash equilibrium problem (SGNEP) describes a subclass of competitive multi-agent optimization problems in which local unilateral minimization of an agent-specific expectation-valued cost function subject to system-wide shared coupling constraints. 
Due to the presence of the uncertainty and the shared constraints, computing a SGNE is generally rather challenging. However, these problems have recently received the attention of the system and control community, especially because of their applicability \cite{kulkarni2012,ravat2011,MerStaIFAC19,yu2017,YeHu16} to relevant problems in the engineering sciences. An important class of models in this context is that of networked Cournot games with market capacity constraints and uncertainty in demand and capacity
~\cite{kannan13addressing}. Instances of these models arise in transportation systems,
where the drivers' perception of travel-time is a possible source of uncertainty~\cite{watling2006}, electricity markets where companies dispatch electricity
without \us{an a priori} knowledge of actual demand~\cite{henrion2007}, and natural gas markets where the companies participate in a bounded capacity market~\cite{abada2013}.

Typically, two key concerns arise in any attempt to deal with uncertainty.
First, often the distribution of the random noise is not known to the
agent, thus making the computation of the cost function impossible. Second,
even if the distribution of the stochastic uncertainty is known or predictable
from, say, historical data, a key complication arises when trying to compute the
expected value (and its gradients). Costly simulation-based integration
techniques required employment each time an agent is asked to compute its
decision variable, imposing significant computational burden on each
agent. A versatile alternative to such approaches is provided
by stochastic approximation theory (SA), under which agents' draw fresh samples
at each iteration. 

With the aim of boosting the performance of distributed Nash equilibrium
seeking algorithms, Yi and Pavel~\cite{yi2019} introduced a preconditioned
forward-backward splitting with inertial effects in a completely deterministic
environment where agents receive perfect feedback information. The
possibility of including inertia in the basic forward-backward scheme has received some attention in the field already
before (see e.g. \cite{alvarez2001,attouch2019,attouch2020,lorenz2015}). The common
motivation of all these contributions is to exploit momentum effects to
accelerate \us{the} numerical schemes, inspired by Nesterov's accelerated
gradient method~\cite{Nes83} for convex optimization. However, in the context
of distributed \us{computation of} Nash equilibria, the role of inertial and
acceleration effects is not well understood. This applies in particular to
situations where the game data are subject to stochastic uncertainty so that
agents have only noisy information available in their decision-making process.
Even in the most general problem where one's aim is to solve a stochastic
monotone inclusion~\cite{Bia16,rosasco2016in,rosasco2016,CuiSha20}, standard
acceleration techniques have not received much attention. Our aim is to shed
some light on this highly understudied question and prove some interesting
properties about accelerated game dynamics. This paper departs from recent
progress made in the field of splitting algorithms for stochastic variational
problems, summarized in \cite{KanSha19,SFBF} and \cite{CuiSha20}, which contain
new asymptotic and non-asymptotic results on stochastic sampling-based
algorithms under weaker hypothesis than usually assumed in the computational
game theory literature. In these seminal contributions, stochastic versions of
Tseng's modified extragradient (hitherto forward-backward-forward) algorithm
\cite{Tse00,BauCom16} have been introduced. The importance of this alternative
splitting technique in the context of distributed Nash equilibrium seeking has
been emphasized in \cite{franci2019fbf}. This work extends all these seminal
contributions via an explicit study of the effects of acceleration parameters.
The numerical scheme presented in this paper is provably convergent (in an
almost sure sense) without assuming co-coercivity of the game operator, and can
be implemented via a disciplined mini-batch stochastic approximation
technology, distributed over a network of competing agents. The main result of
this work gives a precise set of parameter sequences ensuring convergence of
the game play to the set of \emph{variational equilibria}, an important subset of generalized Nash equilibria with a clear economic interpretation
\cite{KulSha12}. Our work extends the recent results reported in
\cite{franci2020fbtac}, reliant on forward-backward splitting ideas, and
thus require co-coercivity, as well as the stochastic extragradient method
introduced in \cite{IusJofOliTho17}, where no joint coupling constraints are
considered.
%
%

\subsection{Basic Notation}

$\RR$ denotes the set of real numbers and $\bar\RR=\RR\cup\{+\infty\}$.
$\langle\cdot,\cdot\rangle:\RR^n\times\RR^n\to\RR$ denotes the standard inner product and $\norm{\cdot}$ represents the associated Euclidean norm. We indicate a (symmetric and) positive definite matrix $A$, i.e., $x^\top Ax>0$, with $A\succ0$. Given a matrix $\Phi\succ0$, we define the $\Phi$-induced inner product as $\langle x, y\rangle_{\Phi}=\langle \Phi x, y\rangle$ and the norm as $\norm{x}_{\Phi}=\sqrt{\langle \Phi x, x\rangle}$. 
$A\otimes B$ indicates the Kronecker product between matrices $A$ and $B$. ${\bf{0}}_m$ (${\bf{1}}_m$) indicates the vector with $m$ entries all equal to $0$ ($1$). Given $x_{1}, \ldots, x_{N} \in \RR^{n}$, $\boldsymbol{x} :=\op{col}\left(x_{1}, \dots, x_{N}\right)=\left[x_{1}^{\top}, \dots, x_{N}^{\top}\right]^{\top}.$

Let $T:\RR^{n}\rightrightarrows \RR^n$ be a set-valued operator. The domain of $T$ are defined by $\dom T= \{x\in\RR^n\mid T(x)\neq\emptyset\}.$ The set of zeros of $T$ is $\Zer(T)= \{x\in\RR^n\mid 0\in T(x)\}$. 
The resolvent of the operator $T$ is $\mathrm{J}_{T}= (\Id+ T)^{-1}$, where $\op{Id}$ indicates the identity operator. 
An operator $T$ is monotone if $\inner{T(x)-T(y),x-y}\geq 0$ and it is Lipschitz continuous if, for some $\beta>0$, $\|T(x)-T(y)\| \leq \beta\|x-y\|$ for all $x,y\in\dom T$. A monotone operator is maximally monotone if its graph is not properly contained in the graph of another monotone operator.

Given a proper, lower semi-continuous, \us{and} convex function $g$, the subdifferential is the operator $\partial g(x):=\{u\in\Omega \mid (\forall y\in\Omega):\langle y-x,u\rangle+g(x)\leq g(y)\}$. The proximal operator is defined as $\prox_{g}(v):=\op{argmin}_{u\in\Omega}\{g(u)+\tfrac{1}{2}\norm{u-v}^{2}_{}\}=\mathrm{J}_{\partial g}(v)$.
$\iota_C$ is the indicator function of the set $C$, i.e., $\iota_C(x)=1$ if $x\in C$ and $\iota_C(x)=0$ otherwise. The set-valued mapping $\mathrm{N}_{C} : \RR^{n} \rightrightarrows \RR^{n}$ denotes the normal cone operator for the the set $C$ , i.e., $\mathrm{N}_{C}(x)=\varnothing$ if $x \notin C,\left\{v \in \RR^{n} | \sup _{z \in C} v^{\top}(z-x) \leq 0\right\}$ otherwise.

All randomness is modeled on a complete probability space $(\Omega,\scrF,\Pr)$, endowed with a filtration $\F=(\scrF_{k})_{k\geq 0}$. 
%
%
\section{Mathematical Setup}
\label{sec:prelims}
\subsection{Generalized Nash equilibrium problems}
We consider a game where each agent $i\in \mc I=\{1,\ldots,N\}$ chooses an action $u_{i}\in\R^{d_i}$. Let $\ubold=\op{col}(u_{1},\ldots,u_{N})$ and $d\equiv\sum_{i=1}^{N}d_{i}$.
Each agent $i$ has a local cost function $\JJ_{i}: \R^{d} \to \bar\RR$ of the form 
\vspace{-.15cm}\begin{equation}\label{eq:theta}
\JJ_{i}(u_{i}, \ubold_{-i})= f_{i}(u_{i},\ubold_{-i}) + g_{i}(u_{i}).
\vspace{-.15cm}\end{equation}
where $\ubold_{-i}=\text{col}(\{u_j\}_{j\neq i})$ is the vector of all decision variables except for $u_i$, and $g_{i}:\RR^{d_i}\to\bar\RR$ is a local idiosyncratic cost function. The function $\JJ_i$ in \eqref{eq:theta} has the typical splitting into smooth and non-smooth parts. 
\begin{standassumption}
For each $i\in\mc I$, the function $g_i$ in \eqref{eq:theta} is proper, convex and lower semi-continuous and $\dom(g_{i})=\setU_i\subseteq\RR^{d_i}$ is (nonempty) compact and convex.
\fineass\end{standassumption}

Examples for the nonsmooth part are indicator functions to enforce local constraints, or penalty functions that promote sparsity, or other desirable structure. 

We assume that the function $f_{i}(u_{i},\ubold_{-i})$ depends on the own action $u_{i}$ and a subset of the others actions $\{u_{j}\}_{j\in\scrN_{i}^{A}}$, where the set $\scrN_{i}^{A}\subset\mc I$ is the \emph{interaction neighborhood} of agent $i$. Furthermore, we assume convexity and differentiability, as usual in the generalized Nash equilibrium problem (GNEP) literature \cite{facchineikanzow2007,facchinei2007vi,kulkarni2012}.
\begin{standassumption}
\label{ass:IC}
For each $i \in\mc I$ and for all $\ubold_{-i}$, the function $f_{i}(\cdot,\ubold_{-i})$ in \eqref{eq:theta} is convex and continuously differentiable. 
\fineass\end{standassumption}

We assume that the game displays joint convexity with affine coupling constraints defining the collective feasible set 
\begin{equation}\label{eq:coupling}
\bs{\mc C} = \{\ubold\in\setU\mid\; D\ubold-b \leq \0_{m}\},
\end{equation}
where $\setU=\setU_1\times\dots\times\setU_N$, $D= [D_1\mid\dots\mid D_N]\in\R^{m\times d}$ and $b= \sum_{i=1}^{N}b_{i}\in\R^m$. 
Each matrix $D_i\in\R^{m\times d_i}$ defines how agent $i$ is involved in the coupling constraints. Given the strategies of all other agents $\ubold_{-i}$, the set of feasible actions of player $i$ is defined as \us{the following set-valued map.} 
\vspace{-.15cm}\begin{equation}
\mc C_{i}(\ubold_{-i}) = \{u_i \in \setU_i \mid D_i u_i -b_{i}\leq \sum_{j \neq i}^{N}(b_{j}-D_j u_j)\}.
\vspace{-.15cm}\end{equation}

\begin{standassumption}\label{ass_X}
The global feasible set $\boldsymbol{\mc C}$ in \eqref{eq:coupling} satisfies Slater's constraint qualification. 
\fineass\end{standassumption}

\us{For $i \in \mc I$, the $i$th agent solves the following parametrized optimization problem.} 
\vspace{-.15cm}\begin{equation}\label{eq:BR}
\forall i\in\mc I: \quad\left\{\begin{array}{cl}
\min\limits_{u_i \in\RR^{d_{i}}} & \JJ_i(u_i, \ubold_{-i}) \\ 
\text { s.t. } & u_{i}\in\mc C_{i}(\ubold_{-i}).
\end{array}\right.
\vspace{-.15cm}\end{equation}
The usual solution concept for the game with coupling constraints in \eqref{eq:BR} is that of \emph{generalized Nash equilibrium} (GNE) \cite{facchinei2007vi,FacPan03}, i.e., an $N$-tuple $\ubold^{\ast}=\textnormal{col}(u_{1}^{\ast},\ldots,u_{N}^{\ast})\in\setU$ such that for all $i\in\mc I$,
\[
\JJ_i(u_{i}^{\ast},\ubold^{\ast}_{-i}) \leq \inf\{ \JJ_i(u_{i},\ubold^{\ast}_{-i}) \, \mid  \, u_{i}\in\mc C_i(\ubold_{-i})\}.
\]


Our computational approach for solving the GNEP in \eqref{eq:BR} makes use of the Karush-Kuhn-Tucker (KKT) conditions characterizing the unilateral optimization of the agents. To achieve a numerically tractable framework, we impose some conditions on the model concerning the monotonicity and Lipschitz continuity of the mapping that collects the local pseudogradients of the agents.
\begin{standassumption}
\label{ass:GM}
The pseudogradient mapping 
\begin{equation}\label{eq:F}
F(\ubold)= \mathrm{col}\left( \nabla_{u_1}f_{1}(\ubold),\ldots,\nabla_{u_N}f_{N}(\ubold)\right)
\end{equation}
is monotone 
and $\ell$-Lipschitz continuous.
\fineass\end{standassumption}

The KKT conditions corresponding to the game in \eqref{eq:BR} are necessary and sufficient for characterizing a \us{tuple of strategies to be a GNE}. Among all possible GNEs of the game, we focus on the computation of \emph{variational equilibria} (v-GNE), i.e. a GNE in which all agents share consensus on the dual variables \cite[Theorem 3.1]{facchinei2007vi}, \cite[Theorem 3.1]{auslender2000} which is, in turn, a solution of the variational system
\vspace{-.15cm}\begin{equation}\label{KKT_VI}
\forall i\in\mc I:\begin{cases}
\0_{d_i}\in \nabla_{u_{i}}f_{i}(u^{\ast}_{i},\ubold^{\ast}_{-i})+\partial g_{i}(u^{*}_{i})+D_{i}^{\top}\lambda^{\ast}\\
\0_{m}\in \op{N}_{\R^{m}_{\geq0}}(\lambda^{\ast})-(D\ubold^{\ast}-b).\\
\end{cases}
\vspace{-.15cm}\end{equation}
for some $\lambda^*\in\RR^M_{\geq0}$.
\subsection{Distributed \us{GNE} via operator splitting}
A key challenge one faces in any computational approach in Nash equilibrium problems is to resolve the question how players access the decision variables of the other agents. \us{An} attractive approach for resolving this issue is the distributed operator splitting approach pioneered in \cite{yi2019}.

We allow each agent to have information on his own local problem data only, i.e., $\JJ_i$, $\setU_i$, $D_{i}$ and $b_{i}$. Moreover, each agent $i$ controls its local decision $u_{i}$ and a local copy $\lambda_{i}\in\R^{m}_{\geq0}$ of dual variables, as well as a local auxiliary variable $\mu_{i}\in\R^{m}$ used to enforce consensus of the dual variables. To reach such consensus, we let the agents exchange information via an undirected weighted communication graph represented by its weighted adjacency matrix $\bW = [w_{i,j}]\in\R^{N\times N}$. We assume $w_{ij}>0$ iff $(i,j)$ is an edge in the communication graph. The set of neighbors of agent $i$ in the communication graph is $\scrN^{\lambda}_{i}=\{j\mid w_{i,j}>0\}$.
\begin{standassumption}\label{ass:graph}
The adjacency matrix $\bW$ of the communication graph is symmetric and irreducible.
\fineass\end{standassumption}
Let us define the weighted Laplacian as $\bL=\diag\left\{(\bW\1_{N})_{1}, \dots, (\bW\1_{N})_{N}\right\}-\bW$. It holds that $\bL^{\top}=\bL$, $\op{null}(\bL)=\{a\bs 1_N,a\in\RR\}$ and that, given Standing Assumption \ref{ass:graph}, $\bL$ is positive semi-definite with real and distinct eigenvalues $0=s_{1}<s_{2}\leq \ldots \leq s_{N}$.
Moreover, given the maximum (weighted) degree of the graph, $\Delta:=\max_{i\in\mc I}(\bW\1_{N})_{i}$, it holds that $\Delta \leq s_{N}\leq 2\Delta$. Denoting by $\kappa= \abs{\bL}$, it holds that $\kappa\leq 2\Delta$ \cite{godsil2013}.  We define the tensorized Laplacian as the matrix $\bar{\bL}=\bL\otimes \bI_{m}$. We set $\bar{b}=(b_1,\ldots,b_{N})^{\top}$, $\ubold=\text{col}(u_{1},\ldots,u_{N})$ and similarly $\mubold$ and $\lambdabold$. As the state variable, we consider the triple $\xbold=(\ubold,\mubold,\lambdabold)\in\setX:=\R^{n}\times\R^{mN}\times\R^{mN}$ and endow $\setX$ with the product topology.
Let $\bD= \diag\{D_1,\ldots,D_N\}$. Then, we define the maximally monotone operators 
\begin{align}
V(\xbold)&= \left[\begin{array}{c} F(\ubold)+\bD^{\top}\lambdabold\\
\bar{\bL}\lambdabold\\
\bar{b}+\bar{\bL}(\lambdabold-\mubold)-\bar{\bD}\ubold\end{array}\right],\\
T(\xbold)&= G(\ubold)\times \{\0_{Nm}\}\times \op{N}_{\R^{mN}_{\geq 0}}(\lambdabold),
\end{align}
where $G(\ubold)= \partial g_{1}(u_{1})\times\cdots\times\partial g_{N}(u_{N})$. 
Let us summarize the properties of the operators above.
\begin{lemma}\label{lemma_op}
The following statements hold:
\begin{itemize}
\item[(i)] $V:\setX\to\setX$ is maximally monotone and $\ell_{V}=( \ell+2\kappa+\abs{\bD})$-Lipschitz continuous.
\item[(ii)] $T:\setX \rightrightarrows \setX$ is maximally monotone.
\end{itemize}
\end{lemma}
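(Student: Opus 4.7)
My plan is to prove both parts by standard operator-theoretic decompositions.

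For part (i), I would first write $V(\xbold) = \Psi(\xbold) + M\xbold + c$, where $\Psi(\xbold) = \op{col}(F(\ubold),\0,\0)$ isolates the only nonlinearity, $c = \op{col}(\0,\0,\bar{b})$ is a constant shift, and $M$ is the block matrix assembling the linear couplings (identifying $\bar{\bD}$ with $\bD$). Monotonicity of $V$ then splits into monotonicity of $\Psi$ (immediate from Standing Assumption 4, since the other components vanish) and monotonicity of the linear map $M$, which reduces to checking that the symmetric part $(M+M^\top)/2$ is positive semi-definite. I expect the off-diagonal blocks to cancel in pairs (the $\bD^\top$ at position $(1,3)$ against $-\bD$ at $(3,1)$, and the $\bar{\bL}$'s at $(2,3)$ and $(3,2)$ being opposite in sign), leaving only the $(3,3)$ block equal to $\bar{\bL}$, which is positive semi-definite because $\bL$ is a graph Laplacian. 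Since $V$ is continuous, single-valued and defined on all of $\setX$, monotonicity upgrades to maximal monotonicity by a standard criterion.

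For the Lipschitz estimate I would apply the triangle inequality row-by-row to $V(\xbold)-V(\ybold)$, using $\|F(\ubold)-F(\ubold')\| \leq \ell\|\ubold-\ubold'\|$ from Standing Assumption 4, $|\bar{\bL}| = \kappa$ (since $\bar{\bL} = \bL\otimes\bI_m$ preserves the spectral norm of $\bL$), and the operator norm $|\bD|$ for the block-diagonal matrix. Combining the three row bounds with $\|\ubold-\ubold'\|,\|\mubold-\mubold'\|,\|\lambdabold-\lambdabold'\| \leq \|\xbold-\ybold\|$ yields the constant $\ell + 2\kappa + |\bD|$: the $2\kappa$ arises because $\lambdabold$ couples through $\bar{\bL}$ in both the second and third rows, while the $|\bD|$ gathers the $\ubold$-coupling in row $3$ and the $\lambdabold$-coupling in row $1$.

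For part (ii), I would identify $T$ as a Cartesian product of three maximally monotone operators. Each $\partial g_i$ is maximally monotone by Rockafellar's theorem because $g_i$ is proper, convex and lower semi-continuous; hence $G = \partial g_1 \times \cdots \times \partial g_N$ is maximally monotone. The zero operator on $\R^{Nm}$ is trivially maximally monotone, and $\op{N}_{\R^{mN}_{\geq 0}} = \partial \iota_{\R^{mN}_{\geq 0}}$ is maximally monotone as the subdifferential of the indicator function of a closed convex cone. Cartesian products of maximally monotone operators remain maximally monotone, which gives the claim.

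The only step requiring real attention is the symmetric-part cancellation in $M$ and the bookkeeping that produces the precise constant $\ell + 2\kappa + |\bD|$; everything else is a textbook application of maximal-monotonicity calculus, and I do not anticipate any genuine obstacle.
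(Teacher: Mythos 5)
Your handling of monotonicity and maximality is sound and in fact more self-contained than the paper's: the paper also splits $V$ into a monotone ``diagonal'' part and a linear part and then simply cites \cite[Prop.~20.23, Cor.~20.28]{BauCom16}, while your explicit check that the symmetric part of the linear block matrix reduces to $\diag(\0,\0,\bar{\bL})\succeq 0$ does the same job; likewise your Cartesian-product argument for (ii) is exactly what the cited lemmas of Yi--Pavel and Franci et al.\ contain. The genuine gap is in the Lipschitz estimate. First, your bookkeeping omits the $-\bar{\bL}\mubold$ coupling in the third component of $V$ (which is $\bar b+\bar{\bL}(\lambdabold-\mubold)-\bD\ubold$), so one $\kappa$-contribution is unaccounted for. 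Second, and more importantly, the row-by-row triangle inequality you describe cannot produce the stated constant even after that omission is repaired: bounding each block by its operator norm and summing rowwise gives $\ell+2\abs{\bD}+3\kappa$, which is strictly larger than the claimed $\ell_V=\ell+2\kappa+\abs{\bD}$. Since $\ell_V$ enters $\ell_{V,\Psi}$ and hence the step-size and relaxation conditions of Theorem~\ref{th:RISFBF}, proving only a larger constant does not establish the lemma as stated.

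To recover the exact constant you must group terms the way the paper does: write $V=V_1+V_2$ with $V_1(\xbold)=\op{col}\bigl(F(\ubold),\0_{mN},\bar b+\bar{\bL}\lambdabold\bigr)$ and $V_2(\xbold)=\op{col}\bigl(\bD^{\top}\lambdabold,\bar{\bL}\lambdabold,-\bD\ubold-\bar{\bL}\mubold\bigr)$. The map $V_2$ is precisely the skew-symmetric matrix you already exhibited when computing the symmetric part of $M$, with block pattern $\bigl[\begin{smallmatrix}0&0&\bD^{\top}\\ 0&0&\bar{\bL}\\ -\bD&-\bar{\bL}&0\end{smallmatrix}\bigr]$; its operator norm is at most $\sqrt{\abs{\bD}^{2}+\kappa^{2}}\le\abs{\bD}+\kappa$, so the $\bD$-pair and the off-diagonal $\bar{\bL}$-pair are each charged only once, not twice as in a rowwise sum. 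The remaining part $V_1$ acts on disjoint blocks of variables and is $\max(\ell,\kappa)\le\ell+\kappa$ Lipschitz. Adding the two constants yields $\ell+2\kappa+\abs{\bD}$. So the fix is minor---reuse your skew/diagonal decomposition of the linear part for the Lipschitz bound instead of the naive rowwise count---but as written your argument does not deliver the constant claimed in Lemma~\ref{lemma_op}.
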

\begin{proof}
(i) We can split the operator $V$ into the parts $V_{1}(\xbold)= \text{col}(F(\ubold),{\bf{0}}_{mN},\bar b+\bar{\bL}\lambdabold)$ and $V_{2}(\xbold)= \text{col}(\bD^{\top}\lambdabold, \bar{\bL}\lambdabold,-\bD\ubold-\bar{\bL}\mubold)$, which are maximally monotone by \cite[Prop. 20.23]{BauCom16}, \cite[Cor. 20.28]{BauCom16}. Furthermore, similarly to \cite[Lemma 1]{franci2019fbf} $V_{1}$ is $\ell_{1}=(\ell+\kappa)$-Lipschitz continuous and $V_{2}$ is $\ell_{2}=(|\bD|+\kappa)$-Lipschitz continuous. Hence, $V$ is $\ell_{1}+\ell_{2}=\ell_{V}$-Lipschitz continuous.

(ii) It follows from \cite[Lemma 5]{yi2019}, \cite[Lemma 1]{franci2019fbf}.
\end{proof}
The splitting $V+T$ encodes a distributed version of the KKT conditions for
v-GNE \eqref{eq:BR}. In particular, it can be shown that the zeros of the
maximally monotone inclusion $V+T$ are in correspondence with variational
equilibria of the Nash game.  \begin{proposition}
The set $\zer(V+T)$ coincides with the set of v-GNE of the game satisfying the KKT conditions in \eqref{KKT_VI}.
\end{proposition}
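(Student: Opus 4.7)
The plan is to prove the two set inclusions separately by unpacking the inclusion $\0 \in V(\xbold)+T(\xbold)$ block by block and matching it against the KKT system \eqref{KKT_VI}. The key structural fact we will exploit is that $\op{null}(\bar{\bL}) = \{\1_N\otimes\lambda\mid\lambda\in\R^m\}$ (a consequence of Standing Assumption \ref{ass:graph}), so that the second block of $V$ forces consensus on the dual variable, while the first block of $\1_N$ in $\op{null}(\bar{\bL})$ lets us sum the third block over agents and kill the $\mubold$ contribution.

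\textbf{Step 1: $\zer(V+T)\subseteq$ KKT-set.} Let $\xbold^*=(\ubold^*,\mubold^*,\lambdabold^*)\in\zer(V+T)$. Writing $\0\in V(\xbold^*)+T(\xbold^*)$ block-wise, we obtain three inclusions: (a) $\0\in F(\ubold^*)+\bD^\top\lambdabold^*+G(\ubold^*)$, (b) $\bar{\bL}\lambdabold^*=\0$, and (c) $\0\in\bar b+\bar{\bL}(\lambdabold^*-\mubold^*)-\bD\ubold^*+\op{N}_{\R^{mN}_{\geq0}}(\lambdabold^*)$. From (b) and the characterization of $\op{null}(\bar{\bL})$, there exists $\lambda^*\in\R^m$ such that $\lambda_i^*=\lambda^*$ for every $i\in\mc I$; in particular $\lambda^*\geq\0_m$ and $\op{N}_{\R^{mN}_{\geq0}}(\lambdabold^*)=\op{N}_{\R^m_{\geq0}}(\lambda^*)^N$. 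Reading (a) component-wise yields the first line of \eqref{KKT_VI}. To extract the second line, I would left-multiply (c) by $(\1_N^\top\otimes\bI_m)$: the term $\bar{\bL}(\lambdabold^*-\mubold^*)$ vanishes because $\1_N\in\op{null}(\bL)=\op{null}(\bL^\top)$, and the remaining entries sum to $b-D\ubold^*\in\op{N}_{\R^m_{\geq0}}(\lambda^*)$, which (up to sign convention) is exactly the second line of \eqref{KKT_VI}. Hence $(\ubold^*,\lambda^*)$ satisfies the KKT system and is therefore a v-GNE by \cite[Theorem 3.1]{facchinei2007vi}.

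\textbf{Step 2: KKT-set $\subseteq\zer(V+T)$.} Let $\ubold^*$ be a v-GNE with common multiplier $\lambda^*\in\R^m_{\geq0}$ solving \eqref{KKT_VI}. Define $\lambdabold^*:=\1_N\otimes\lambda^*$, which immediately gives consensus and (b). Inclusion (a) holds block-by-block by the first line of \eqref{KKT_VI}, since $(\bD^\top\lambdabold^*)_i=D_i^\top\lambda^*$. The non-trivial piece is to construct $\mubold^*$ realizing (c). From the second line of \eqref{KKT_VI} there exists $v\in\op{N}_{\R^m_{\geq0}}(\lambda^*)$ with $v=D\ubold^*-b$. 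Since $\op{N}_{\R^m_{\geq0}}(\lambda^*)$ is a convex cone, I would set $v_i:=v/N\in\op{N}_{\R^m_{\geq0}}(\lambda^*)$ and define $w_i:=v_i+b_i-D_iu_i^*$. By construction $\sum_{i=1}^N w_i=0$, so $w=\op{col}(w_1,\dots,w_N)\in\op{range}(\bar{\bL})$; pick any $\mubold^*$ with $\bar{\bL}\mubold^*=-w$. Substituting back, $\bar b+\bar{\bL}(\lambdabold^*-\mubold^*)-\bD\ubold^*=\op{col}(v_1,\dots,v_N)\in\op{N}_{\R^{mN}_{\geq0}}(\lambdabold^*)$, which yields (c) and hence $\xbold^*\in\zer(V+T)$.

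\textbf{Main obstacle.} The routine calculations are all standard. The only delicate point is the construction of $\mubold^*$ in Step 2: one must simultaneously respect the normal-cone condition for each \emph{local} dual copy while guaranteeing that the residual lies in $\op{range}(\bar{\bL})$. The convex-cone property of $\op{N}_{\R^m_{\geq0}}(\lambda^*)$ allows the uniform splitting $v_i=v/N$, and the identity $\op{range}(\bar{\bL})=(\1_N\otimes\R^m)^\perp$ closes the argument. Essentially, the proof is a transcription of \cite[Theorem 2]{yi2019} adapted to the \FBF-style splitting, and I would cite \cite{yi2019,franci2019fbf} for this equivalence.
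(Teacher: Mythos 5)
Your two-step, block-by-block verification is a legitimately more self-contained route than the paper's, which disposes of this proposition in one line by citing \cite[Thm.~2]{yi2019} and \cite[Lemma 3]{franci2019fbf}; Step 1 is essentially correct (the summed third block gives $D\ubold^{*}-b\in\op{N}_{\R^{m}_{\geq0}}(\lambda^{*})$, i.e.\ exactly the second KKT line --- your ``$b-D\ubold^{*}$'' is the flagged sign slip, and the argument also uses, correctly but tacitly, that a sum of normal-cone elements stays in the cone).

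There is, however, a concrete error in Step 2 that makes the final verification false as written. With your choices $v_{i}=v/N$, $w_{i}=v_{i}+b_{i}-D_{i}u_{i}^{*}$ and $\bar{\bL}\mubold^{*}=-w$, the $i$-th block of $\bar b+\bar{\bL}(\lambdabold^{*}-\mubold^{*})-\bD\ubold^{*}$ equals $b_{i}+w_{i}-D_{i}u_{i}^{*}=v_{i}+2(b_{i}-D_{i}u_{i}^{*})$, not $v_{i}$, so the claimed identity fails unless $D_{i}u_{i}^{*}=b_{i}$. Moreover, even if that block did equal $v_{i}$, membership of the $V$-part itself in $\op{N}_{\R^{mN}_{\geq0}}(\lambdabold^{*})$ is not what inclusion (c) asks for: (c) requires the $V$-part to be the \emph{negative} of a normal-cone element, and $\op{N}_{\R^{m}_{\geq0}}(\lambda^{*})$ is a cone, not a subspace, so the two are not interchangeable. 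Both issues disappear with the opposite sign: choose $\mubold^{*}$ with $\bar{\bL}\mubold^{*}=+w$ (possible since $\sum_{i}w_{i}=0$ and $\op{range}(\bar{\bL})=\{\zbold:\sum_i z_i=\0_m\}$); then the $i$-th block of the $V$-part is $b_{i}-w_{i}-D_{i}u_{i}^{*}=-v_{i}$, and adding $\op{col}(v_{1},\dots,v_{N})\in\op{N}_{\R^{mN}_{\geq0}}(\lambdabold^{*})$ gives $\0$, which is precisely (c). With this one-sign repair your construction is exactly the argument underlying \cite[Thm.~2]{yi2019}, and the proposition follows.
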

\begin{proof}
This follows from~\cite[Thm. 2]{yi2019}~or~\cite[Lemma 3]{franci2019fbf}.
\end{proof}


\subsection{Stochastic GNEPs}
Stochastic uncertainty affecting the decision problem of agent $i$ is modeled by a random variable $\xi_{i}:\Omega\to\Xi_{i}$, where $\Xi_{i}\subset\R^{q_{i}}$ is a given measurable set. We assume that the uncertainty enters the model in the smooth part of the agents' optimization problem, i.e., for each $i\in\mc I$
\begin{equation}\label{eq:Ef}
\JJ_i(u_i,\ubold_{-i})=\Ex[\hat{f}_{i}(\ubold,\xi_{i})]+g_i(u_i).
\end{equation}
It follows that the local optimization problems in \eqref{eq:BR} describes a stochastic programming problem, parameterized by the decisions of the opponents $\ubold_{-i}$. 

Let $k\in\N$ denote the iteration count of our computational procedure. We assume that at each round $k$ agent $i$ is able to generate a random sample $\xi_{i,k}=(\xi_{i,k}^{(t)})_{t=1}^{S_k}$, consisting of i.i.d copies of the random element $\xi_{i}$. This sample is used to construct an agent-specific gradient estimator of the form 
\begin{equation}\label{eq:hatF}
\hat{F}_{i,k}(\ubold,\xi_{i,k})=\frac{1}{S_k}\sum_{t=1}^{S_k}\nabla_{u_{i}}\hat{f}_{i}(\ubold,\xi^{(t)}_{i,k}),
\end{equation}
where $S_k\geq 1$ is the size of the data sample. \eqref{eq:hatF} is an example of a mini-batch estimator, which interpolates between cheap sampling and precision. The degree of precision is regulated via the batch size sequence $\{S_{k}\}_{k}$. Dynamically adjusting the size of the batch simulates an online variance reduction mechanism, which plays a key role in our convergence analysis of the distributed operator splitting algorithm to come. Mini-batch samples are prominent in simulation-based optimization, where taking repeated samples of stochastic gradients is computationally cheap \cite{Byrd2012,IusJofOliTho17,SFBF,CuiSha20,lei2018distributed}. 
\begin{standassumption}\label{ass:batch}
The batch size $(S_k)_{k\geq 1}$ is increasing and such that $\sum_{k\in\NN}\frac{1}{S_k}<\infty.$
\fineass\end{standassumption}
Under the prevailing i.i.d. assumption, it holds true that $\Ex[\hat{F}_{i,k}(\ubold,\xi_{i,k})|\ubold]=F_{i}(\ubold)$ for all $i\in\mc I$ and all $\ubold\in\R^{d}$. Hence, the random variable \eqref{eq:hatF} is an \emph{unbiased} estimator of the individual payoff gradient at each action profile $\ubold$. Upon defining the random operator
\begin{equation}
\hat{V}_{k}(\xbold,\xi_{k})=  \left[\begin{array}{c} \hat{F}_{k}(\ubold,\xi_{k})+\bD^{\top}\lambdabold\\
\bar{\bL}\lambdabold\\
\bar{b}+\bar{\bL}(\lambdabold-\mubold)-\bD\ubold\end{array}\right],\\
\end{equation}
with $\xi_k=\op{col}(\xi_{i,k})_{i\in\mc I}$, we see that $\Ex[\hat{V}_{k}(\xbold,\xi_{k})|\xbold]=V(\xbold)$ for all $\xbold=(\ubold,\mubold,\lambdabold)\in\setX$.

Fundamental to the analysis of stochastic approximation algorithms is the control of the stochastic error, defined for all $k\in\NN$ as
\begin{equation}\label{eq:eps}
\eps_{k}(\xbold,\xi_{k})= \hat{V}_{k}(\xbold,\xi_{k})-V(\xbold)\quad\forall \xbold\in\setX.
\end{equation}

\begin{standassumption}\label{ass:variance}
    \us{There exists $\sigma > 0$ such that for all} $k\in\NN$,  the stochastic error is such that \us{the following hold $\mathbb{P}$-a.s. .}
\vspace{.05cm}
\begin{align}
\Ex_{\Pr}[\eps_{k}(\xbold,\xi_{k})\mid\xbold]=0, \label{eq:epszero}\text{ and }\\
\Ex_{\Pr}[\norm{\eps_{k}(\xbold,\xi_{k})}^{2}\mid\xbold]\leq\frac{\sigma^{2}}{S_{k}}. \label{eq:epsvariance}
\end{align}
\vspace{.05cm}
\fineass\end{standassumption}
\begin{remark}
    Assumption \ref{ass:variance} is rather mild and standard in stochastic optimization \cite{bot2020,franci2020fbtac} while Condition \eqref{eq:epszero} means that the random operator $\hat{V}_{k}(\xbold,\xi_{k})$ is \us{a conditionally} unbiased estimator of $V(\xbold)$. \us{Note that} \eqref{eq:epsvariance} can be satisfied if the sequence of martingale difference errors $\hat{F}_{k}(\ubold,\xi)-F(\ubold)$ satisfies a uniform variance bound \cite{bot2020,franci2020fbtac}. 
\end{remark}


\section{A distributed algorithm}

\begin{algorithm}[t]
\caption{Distributed Relaxed Inertial Stochastic Forward Backward Forward (RISFBF)}\label{alg:RISFBF}
Initialization: $u_{i,0} \in\R^{d_{i}} , \lambda_{i,0}\in \R_{\geq0}^{m},$ and $\mu_{i,0} \in \R^{m} .$\\
Iteration $k$: Agent $i$\\
($1$) Perform inertia step: 
$$\begin{aligned}
u^{in}_{i,k}&=u_{i,k}+\alpha(u_{i,k}-u_{i,k-1})\\
\mu^{in}_{i,k}&=\mu_{i,k}+\alpha(\mu_{i,k}-\mu_{i,k-1})\\
\lambda^{in}_{i,k}&=\lambda_{i,k}+\alpha(\lambda_{i,k}-\lambda_{i,k-1}).
\end{aligned}
$$
($2$) Receives $u^{in}_{j,k}$ for $j \in \scrN_{i}^{A}$, $ \lambda^{in}_{j,k}$ and $\mu^{in}_{j,k}$ for $j \in \scrN_{i}^{\lambda}$ and update 
$$\begin{aligned}
&u^{md}_{i,k}=\prox_{\gamma_{i}g_{i}}[u^{in}_{i,k}-\gamma_{i}(\hat{F}_{i,k}(\ubold^{in}_{k},\xi_{k})+D_{i}^{\top} \lambda_{i,k})]\\
&\mu^{md}_{i,k}=\mu^{in}_{i,k}+\sigma_{i} \sum\nolimits_{j} w_{i,j}(\lambda^{in}_{j,k}-\lambda^{md}_{i,k})\\
&\lambda^{md}_{i,k}=\Pi_{\R^{m}_{\geq 0}}\{\lambda^{in}_{i,k}+\tau_{i}(D_{i}u^{in}_{i,k}-b_{i})\\
&\quad\quad+\tau\sum\nolimits_{j} w_{i,j}[(\mu^{in}_{i,k}-\mu^{in}_{j,k})-(\lambda^{in}_{i,k}-\lambda^{in}_{j,k})]\}
\end{aligned}$$

($3$) Receives $u^{md}_{j,k}$ for $j \in \scrN_{i}^{A}$, $ \lambda^{md}_{j,k}$ and $\mu^{md}_{j,k}$ for $j \in \scrN_{i}^{\lambda}$ and performs a relaxation step:
$$\begin{aligned}
&u_{i,k+1}=(1-\rho_{k})u^{in}_{i,k}+\rho_{k}[u^{md}_{i,k}+\gamma_{i}(\hat{F}_{i,k}(\ubold^{in}_{k},\xi_{i,k})+\\
&\quad\quad\quad-\hat{F}_{i,k}(\ubold^{md}_{k},\eta_{i,k}))+\gamma_{i}D_{i}^{\top} (\lambda^{in}_{i,k}-\lambda^{md}_{i,k})\\
&\mu_{i,k+1}=(1-\rho_{k})\mu_{i,k}^{in}+\rho_{k}\{\mu^{md}_{i,k}+\\
&\quad \sigma_{i} \sum\nolimits_{j} w_{i,j}[(\lambda^{in}_{i,k}-\lambda^{in}_{j,k})-(\lambda^{md}_{i,k}-\lambda^{md}_{j,k})]\}\\
&\lambda_{i,k+1}=(1-\rho_{k})\lambda^{in}_{i,k}+\rho_{k}\{[\lambda^{md}_{i,k}+\tau_{i}D_i(u^{in}_{i,k}-u^{md}_{i,k})\\
&\quad\quad\quad-\tau_i\sum\nolimits_{j \in \mathcal{N}_{i}^{\lambda}} w_{i,j}[(\mu^{md}_{i,k}-\mu^{md}_{j,k})-(\mu^{in}_{i,k}-\mu^{in}_{j,k})]\\
&\quad\quad\quad+\tau_i\sum\nolimits_{j \in \mathcal{N}_{i}^{\lambda}} w_{i,j}[(\lambda^{in}_{i,k}-\lambda^{in}_{j,k})-(\lambda^{md}_{i,k}-\lambda^{md}_{j,k})]\}
\end{aligned}$$
\end{algorithm}

With the \us{intent of boosting} the convergence of distributed Nash seeking algorithms, we propose a relaxed inertial forward-backward-forward algorithm (RISFBF), presented in Algorithm \ref{alg:RISFBF}. Using operator-theoretic notation, the numerical scheme can be restated compactly as 
\begin{equation}\label{eq:alg2}
\left\{\begin{array}{l}
Z_{k}=X_{k}+\alpha_{k}(X_{k}-X_{k-1}),\\
Y_{k}=\mathrm{J}_{\Psi^{-1}T}(Z_{k}-\Psi^{-1}\hat{V}_{k}(Z_{k},\xi_{k})),\\
X_{k+1}=(1-\rho_{k})Z_{k}+\\
 \qquad\rho_{k}[Y_{k}-\Psi^{-1}(\hat{V}_{k}(Y_{k},\eta_{k})-\hat{V}_{k}(Z_{k},\xi_{k}))],
\end{array}\right.
\end{equation}
where $Z_{k}=(\ubold^{in}_{k},\mubold^{in}_{k},\lambdabold_{k}^{in})$, $X_{k}= (\ubold_{k},\mubold_{k},\lambdabold_{k})$ and $Y_{k}=(\ubold^{md}_{k},\mubold^{md}_{k},\lambdabold_{k}^{md})$. 
The random sequence $\eta_{k}= (\eta_{i,k})_{i\in\mc I}$ is another i.i.d. random sample, generated independently by each agent  after the first updating step in Algorithm \ref{alg:RISFBF} is completed. The preconditioning matrix 
\begin{equation}\label{eq:Psi}
\Psi= \diag(\gammabold^{-1},\sigmabold^{-1},\taubold^{-1})
\end{equation}
collects all agent-specific step sizes, so that $\gammabold=\diag\{\gamma_{1}\bI_{d_{1}},\ldots,\gamma_{N}\bI_{d_{N}}\}$ is a block-diagonal matrix with $\gamma_{i}>0$ (analogously, $\sigmabold$ and $\taubold$). 

The iterations involve first an inertial step in the primal-dual space. Then, there is
a proximal step corresponding to a gradient-based update given the stochastic estimate of the pseudogradient and the local estimate of the dual variable, followed by a consensus-enforcing estimate merging the values of the dual variables of neighboring agents, and a dual update in the spirit of Lagrangian methods. 
The last step is a weighted average between the inertial iterate $Z_{k}$ and the forward update $Y_{k}$.
The algorithm is distributed and involves communication only in terms of dual variables. This fact makes the scheme very attractive for decentralized implementations in large \us{networked game-theoretic settings}.

Standing Assumption \ref{ass:GM} and Lemma \ref{lemma_op} \us{imply} that $V$ is monotone and $\ell_{V,\Psi}=\ell_{V}/\lambda_{\min}(\Psi)$-Lipschitz continuous in the $\Psi$-induced norm \cite{franci2019fbf}.

\begin{theorem}\label{th:RISFBF}
\ssc{Suppose $\nu$ is a positive scalar where $0<\nu<1$.} Let $\lambda_{\ssc{\min}}(\ssc{\Psi})\in\left(0,\ssc{\tfrac{1-\nu}{2\ell_{V}}}\right)$, $0<\alpha_k\le\bar{\alpha}<1$ and 
$\rho_k = {\tfrac{\ssc{(3-\nu)}(1-\bar{\alpha})^2}{\ssc{2}(2\alpha_k^2-\alpha_k+1)(1+\ell_{V,\Psi})}}$. Then, the sequence $(\ubold_{k})_{k\geq 1}$ generated by Algorithm \ref{alg:RISFBF} converges almost surely to a v-GNE of the game in \eqref{eq:BR}.
\end{theorem}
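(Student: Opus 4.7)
The plan is to analyze Algorithm~\ref{alg:RISFBF} through its compact operator form~\eqref{eq:alg2} and prove almost sure convergence of the primal-dual iterate $X_k$ to some element of $\zer(V+T)$; by the preceding proposition, the primal block $\ubold_k$ then converges to a v-GNE. The overarching strategy is to derive a stochastic quasi-Fej\'er-type recursion in the $\Psi$-induced norm, feed it into the Robbins--Siegmund supermartingale theorem, and identify cluster points via maximal monotonicity of $V+T$.

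The first main step is a deterministic FBF descent inequality relative to any fixed $\xbold^\star\in\zer(V+T)$. The resolvent characterization $\Psi(Z_k-Y_k)-\hat V_k(Z_k,\xi_k)\in T(Y_k)$, combined with monotonicity of $V+T$ and $\ell_{V,\Psi}$-Lipschitz continuity of $V$ in the $\Psi$-norm, yields a Tseng-type estimate
\[
\|\tilde Y_k-\xbold^\star\|_\Psi^2\leq\|Z_k-\xbold^\star\|_\Psi^2-\nu\|Z_k-Y_k\|_\Psi^2+R_k,
\]
where $\tilde Y_k:=Y_k+\Psi^{-1}(V(Z_k)-V(Y_k))$, the coercivity margin $\nu>0$ is provided by the step-size bound on $\lambda_{\min}(\Psi)$, and $R_k$ gathers bilinear terms in the stochastic errors $\eps_k(Z_k,\xi_k)$ and $\eps_k(Y_k,\eta_k)$. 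Writing the relaxation step as $X_{k+1}=(1-\rho_k)Z_k+\rho_k\tilde Y_k+\rho_k\Psi^{-1}(\eps_k(Z_k,\xi_k)-\eps_k(Y_k,\eta_k))$ and expanding $\|X_{k+1}-\xbold^\star\|_\Psi^2$ via the convex-combination identity $\|(1-\rho)a+\rho b-c\|^2=(1-\rho)\|a-c\|^2+\rho\|b-c\|^2-\rho(1-\rho)\|a-b\|^2$, conditional expectation together with Standing Assumption~\ref{ass:variance} annihilates the linear noise terms and leaves a second-moment residual bounded by $C\rho_k/S_k$.

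The second step treats the inertia through the expansion
\[
\|Z_k-\xbold^\star\|_\Psi^2=(1+\alpha_k)\|X_k-\xbold^\star\|_\Psi^2-\alpha_k\|X_{k-1}-\xbold^\star\|_\Psi^2+\alpha_k(1+\alpha_k)\|X_k-X_{k-1}\|_\Psi^2,
\]
and the Lyapunov functional $\phi_k:=\|X_k-\xbold^\star\|_\Psi^2-\alpha_k\|X_{k-1}-\xbold^\star\|_\Psi^2+\chi_k\|X_k-X_{k-1}\|_\Psi^2$. The prescribed formula for $\rho_k$ is precisely the calibration that allows the negative contribution $\nu\rho_k\|Z_k-Y_k\|_\Psi^2$ to simultaneously absorb the inertial quadratic $\rho_k\alpha_k(1+\alpha_k)\|X_k-X_{k-1}\|_\Psi^2$ and the relaxation penalty $\rho_k(1-\rho_k)\|\tilde Y_k-Z_k\|_\Psi^2$ (bounded by a Lipschitz multiple of $\|Z_k-Y_k\|_\Psi^2$), enabling a choice of $\chi_k>0$ for which
\[
\EE[\phi_{k+1}\mid\mathcal{F}_k]\leq\phi_k-c\rho_k\|Z_k-Y_k\|_\Psi^2+C\rho_k/S_k.
\]
Since $\sum_k S_k^{-1}<\infty$ by Standing Assumption~\ref{ass:batch} and $\rho_k$ is bounded away from zero (because $\alpha_k\leq\bar\alpha<1$), Robbins--Siegmund delivers almost sure convergence of $\phi_k$, boundedness of $(X_k)$, summability of $\|Z_k-Y_k\|_\Psi^2$, and $\|X_k-X_{k-1}\|_\Psi\to0$.

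Any cluster point $\bar{\xbold}$ of $(X_k)$ is therefore also a cluster point of both $(Z_k)$ and $(Y_k)$, since $Z_k-X_k=\alpha_k(X_k-X_{k-1})\to0$ and $Z_k-Y_k\to0$ almost surely. Passing to the limit in $\Psi(Z_k-Y_k)-\hat V_k(Z_k,\xi_k)\in T(Y_k)$ with $\hat V_k(Z_k,\xi_k)\to V(\bar{\xbold})$ in mean square (via~\eqref{eq:epsvariance} and $S_k\to\infty$), and invoking maximal monotonicity of $V+T$ from Lemma~\ref{lemma_op}, yields $0\in(V+T)(\bar{\xbold})$. A standard Opial-style argument, powered by a.s.\ convergence of $\|X_k-\xbold^\star\|_\Psi$ for every $\xbold^\star\in\zer(V+T)$, then identifies the full a.s.\ limit as a single point of $\zer(V+T)$, equivalently a v-GNE. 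The principal obstacle lies in the Lyapunov construction of the second step: jointly calibrating $\chi_k$ and the prescribed $\rho_k$ so that inertial overhead, relaxation slack, and deterministic FBF coercivity combine into a Robbins--Siegmund-compatible inequality while the combination $\|X_k-\xbold^\star\|_\Psi^2-\alpha_k\|X_{k-1}-\xbold^\star\|_\Psi^2+\chi_k\|X_k-X_{k-1}\|_\Psi^2$ remains eventually nonnegative under $\bar\alpha<1$.
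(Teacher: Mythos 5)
Your proposal is correct in outline and rests on the same backbone as the paper's argument: a Tseng-type quasi-Fej\'er estimate in the $\Psi$-norm for the relaxed step, the inertial expansion $\norm{Z_k-p}^2_{\Psi}=(1+\alpha_k)\norm{X_k-p}^2_{\Psi}-\alpha_k\norm{X_{k-1}-p}^2_{\Psi}+\alpha_k(1+\alpha_k)\norm{X_k-X_{k-1}}^2_{\Psi}$, an Alvarez--Attouch-style Lyapunov functional of exactly your form $\phi_k$ (the paper takes $\chi_k=(1-\alpha_k)\bigl(\tfrac{3-\nu}{2\rho_k(1+\ell_{V,\Psi})}-1\bigr)$ and shows nonnegativity via a Young-inequality computation enabled by the prescribed $\rho_k$), Robbins--Siegmund, and $\liminf_k\rho_k>0$. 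Where you genuinely diverge is in the identification of the limit: the paper converts the $\norm{Z_k-Y_k}^2_{\Psi}$ decrement into the deterministic residual $r_{\Psi}(Z_k)$ through Lemma \ref{lem:YZG} (paying an extra $\norm{U_k}^2_{\Psi^{-1}}$ term) and concludes from $r_{\Psi}(Z_k)\to0$, whereas you keep $\norm{Z_k-Y_k}^2_{\Psi}$, pass to the limit in the inclusion $\Psi(Z_k-Y_k)-\hat V_k(Z_k,\xi_k)\in T(Y_k)$, and invoke graph-closedness of the maximally monotone $V+T$ plus an Opial argument; this is equally valid in the finite-dimensional setting and is in fact more explicit than the paper's terse final step. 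Two details in your sketch need repair rather than rethinking: (i) the inertial quadratic $\alpha_k(1+\alpha_k)\norm{X_k-X_{k-1}}^2_{\Psi}$ carries no $\rho_k$ factor and is not dominated by $\nu\rho_k\norm{Z_k-Y_k}^2_{\Psi}$ (these quantities are not comparable); in the paper it is absorbed by the relaxation \emph{gain} $-\tfrac{1-\rho_k}{\rho_k}\norm{X_{k+1}-Z_k}^2_{\Psi}$ --- a help, not a penalty --- combined with $\norm{X_{k+1}-Z_k}^2_{\Psi}\geq(1-\alpha_k)\norm{X_{k+1}-X_k}^2_{\Psi}+(\alpha_k^2-\alpha_k)\norm{X_k-X_{k-1}}^2_{\Psi}$ and the telescoping $\chi_k$ term, which is precisely where the stated $\rho_k$ enters; (ii) the linear noise term involving $U_k=\eps_k(Z_k,\xi_k)$ paired with $Y_k$ (or $\tilde Y_k$) does \emph{not} vanish under conditioning on $\scrF_k$, since $Y_k$ depends on $\xi_k$; one must compare $Y_k$ with the noise-free resolvent (nonexpansiveness gives an $O(\norm{U_k}^2_{\Psi^{-1}})$ bound, which is what the paper's Lemma \ref{lem:YZG} and its $W_k$-only martingale term $\Delta N_k(p)$ accomplish) before your claimed $O(\rho_k/S_k)$ residual is legitimate. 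With those two fixes your route goes through and buys a slightly cleaner limit identification at the price of redoing the absorption bookkeeping the paper already supplies.
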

\begin{proof}
See Section \ref{sec:RISFBF}.
\end{proof}

\begin{remark}
The classic SFBF algorithm \cite{SFBF} is obtained as a special case by taking $\alpha=0$ and $\rho_k=1$. Under noise-free feedback, this scheme would coincide with the operator-splitting approach of \cite{franci2019fbf}.
\end{remark}

\section{Numerical Results}
\label{sec:Applications}

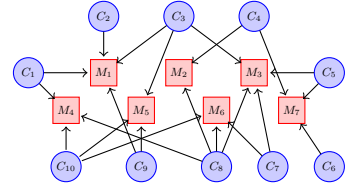
\begin{figure}
\begin{center}
\begin{tikzpicture}[scale=1,every node/.style={circle,draw=black,scale=.5,minimum size=.8cm}]
 

\node[rectangle,draw=red,fill=red!20,minimum size=.7cm](m1) at (.5,-1){};
\node[rectangle,draw=red,fill=red!20,minimum size=.7cm](m2) at (1.5,-1){};
\node[rectangle,draw=red,fill=red!20,minimum size=.7cm](m3) at (2.5,-1){};
\node[rectangle,draw=red,fill=red!20,minimum size=.7cm](m4) at (0,-1.5){};
\node[rectangle,draw=red,fill=red!20,minimum size=.7cm](m5) at (1,-1.5){};
\node[rectangle,draw=red,fill=red!20,minimum size=.7cm](m6) at (2,-1.5){};
\node[rectangle,draw=red,fill=red!20,minimum size=.7cm](m7) at (3,-1.5){};

\node[circle,draw=blue,fill=blue!20](c1) at (-.5,-1){};
\node[circle,draw=blue,fill=blue!20](c2) at (.5,-.25){};
\node[circle,draw=blue,fill=blue!20](c3) at (1.5,-.25){};
\node[circle,draw=blue,fill=blue!20](c4) at (2.5,-.25){};
\node[circle,draw=blue,fill=blue!20](c5) at (3.5,-1){};
\node[circle,draw=blue,fill=blue!20](c6) at (3.5,-2.25){};
\node[circle,draw=blue,fill=blue!20](c7) at (2.75,-2.25){};
\node[circle,draw=blue,fill=blue!20](c8) at (2,-2.25){};
\node[circle,draw=blue,fill=blue!20](c9) at (1,-2.25){};
\node[circle,draw=blue,fill=blue!20](c10) at (0,-2.25){};

\node[draw=none,fill=none](m1) at (.5,-1){\small$M_1$};
\node[draw=none,fill=none](m2) at (1.5,-1){\small$M_2$};
\node[draw=none,fill=none](m3) at (2.5,-1){\small$M_3$};
\node[draw=none,fill=none](m4) at (0,-1.5){\small$M_4$};
\node[draw=none,fill=none](m5) at (1,-1.5){\small$M_5$};
\node[draw=none,fill=none](m6) at (2,-1.5){\small$M_6$};
\node[draw=none,fill=none](m7) at (3,-1.5){\small$M_7$};

\node[draw=none,fill=none](c1) at (-.5,-1){\small$C_1$};
\node[draw=none,fill=none](c2) at (.5,-.25){\small$C_2$};
\node[draw=none,fill=none](c3) at (1.5,-.25){\small$C_3$};
\node[draw=none,fill=none](c4) at (2.5,-.25){\small$C_4$};
\node[draw=none,fill=none](c5) at (3.5,-1){\small$C_5$};
\node[draw=none,fill=none](c6) at (3.5,-2.25){\small$C_6$};
\node[draw=none,fill=none](c7) at (2.75,-2.25){\small$C_7$};
\node[draw=none,fill=none](c8) at (2,-2.25){\small$C_8$};
\node[draw=none,fill=none](c9) at (1,-2.25){\small$C_9$};
\node[draw=none,fill=none](c10) at (0,-2.25){\small$C_{10}$};

\foreach \from/\to in
{c1/m1,c1/m4,c2/m1,c3/m1,c3/m5,c3/m3,c4/m2,c4/m7,c5/m3,c5/m7,c6/m7,c7/m3,c7/m6,c8/m3,c8/m6,c8/m2,c8/m4,c9/m5,c9/m1,c10/m4,c10/m5,c10/m6}
\draw[-to] (\from) -- (\to); 

\end{tikzpicture}
\end{center}
\caption{Networked Cournot game: an edge from $C_i$ to $M_j$ means that company $i$ sells energy in market $j$.}\label{fig_market}
\end{figure}

%

In this section, we report the results of some numerical simulations to illustrate the improved performance of the RISFBF algorithm (Algorithm \ref{alg:RISFBF}) compared to the classic SFBF \cite{bot2020,franci2019fbf} and with the preconditioned SFB \cite{franci2020fbtac,yi2019}.

Let us consider a networked Cournot problem with market capacity constraints as, for instance, the electricity market or the gas market, inspired by \cite{kannan2012}. We suppose that there are $N=10$ firms selling energy in $m=7$ markets. Not every company sells quantities on each market. Instead, we let $\mathcal{M}_{i}\subseteq\{1,\ldots,m\}$ denote the subset of markets firm $i$ is active on. Each company has a cost of production $c_i(u_i)=c_i^\top u_i$ where $c_i\in\RR^{d_i}$ is chosen according to a truncated normal distribution, i.e., $[c_i]_j=\max(N(2,1),0.6)$. Moreover, each market $j$ has an inverse demand function $P_j(\ubold,\xi)=q_j+p_j(\xi)[S_j(\ubold)]^\sigma$ where $q_j=400$ and $p_j(\xi)$ depends on the unknown random variable, e.g, the overall demand. The values of $p_j(\xi)$ are randomly generated with a normal distribution with mean $0.02$ and bounded variance. The variable $S_j(\ubold)=\sum_{i\in\mc I}[u_i]_j$ couples the actions of the companies and it represents the total energy sold in market $j$. Hence, the cost function of each company is $\JJ_{i}(u_i,\bs u_{-i})=c_i(u_i)-\sum_{j\in\scrM_{i}}\EE[P_j(\ubold,\xi)[u_i]_j]$. The corresponding pseudogradient mapping is monotone, according to \cite[Section 4]{kannan2012}, for $1<\sigma\leq 3$. Therefore, we fix $\sigma=1.2$.
Moreover, we suppose that the companies have a limited production, i.e., $0\leq [u_i]_{j}\leq\theta_{i,j}$ with $\theta_{i,j}=\max(N(250,50),0)$ for $j\in\mathcal{M}_{i}$. This can be incorporated by setting $g_{i}(u_{i})=\sum_{j\in\mathcal{N}_{i}^{A}}\iota_{[0,\theta_{i,j}]}([u_{i}]_{j})$. Similarly, the markets have a bounded capacity $b_j\in[5,10],j=1,\ldots,m,$ and the coupling between the companies can be retrieved from Figure \ref{fig_market}.

\begin{figure}[t]
\begin{center}
\includegraphics[width=\columnwidth]{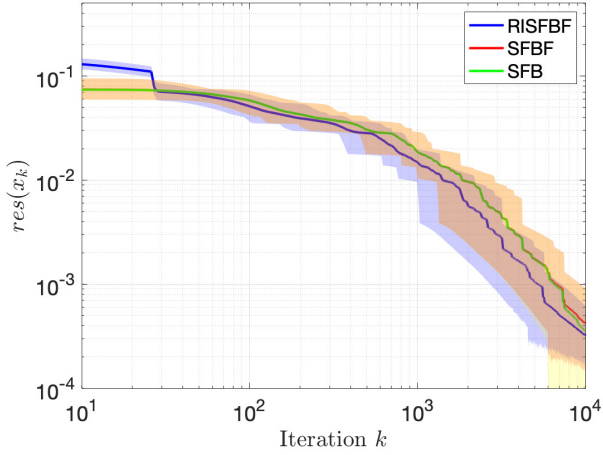}
\end{center}
\caption{Residual distance of the primal variable form the solution.}\label{fig_comparison}
\end{figure}


The plot in Figure \ref{fig_comparison} shows the performance, in terms of the residual, of our proposed algorithm in comparison with the SFBF and SFB algorithms. The residual mapping is defined as $\op{res}(x^k)=\norm{x^k-\op{proj}_{\mathcal C}(x^k-F(x^k))}$ and it measures the distance of the primal variable from being a Nash equilibrium. The thick line indicates the average performance and the transparent area is the variability over 10 simulations.
The acceleration parameter is updated according to $\alpha_k=\bar\alpha(1-\frac{1}{k+1})$ with $\bar\alpha=0.1$ and the relaxation parameter is $\rho_k=\frac{(3-\nu)(1-\bar\alpha)^2}{2(2\alpha_k^2-\alpha_k+1)(1+\ell_{V,\Psi})}$, with $\nu=0.01$.
Figure \ref{fig_alpha} shows how the performance changes varying the inertial parameter $\bar\alpha$ while $\rho_k=1$ is fixed. For the sake of comparison, we also include the performance with the same parameters as in Figure \ref{fig_comparison}, the updating rule for $\rho_k$ as in Theorem \ref{th:RISFBF} and the SFBF ($\alpha_k=0$, $\rho_k=0$).

\section{Analysis}
\label{sec:analysis}

\subsection{Preparatory facts}

To simplify the analysis, let us define the random processes $\scrA_{k}:=\hat{V}_{k}(Z_{k},\xi_{k})$ and $\scrB_{k}:=\hat{V}_{k}(Y_{k},\eta_{k})$. Define the sub-sigma algebra $\scrF_{k}:=\sigma(\xbold_{0},\xi_{0},\ldots,\xi_{k-1},\eta_{0},\ldots,\eta_{k-1})$, and $\scrG_{k}:=\sigma\left(\scrF_{k}\cup\sigma(\xi_{k})\right)$. We introduce the centered error processes $U_{k}:=\scrA_{k}-\Ex[\scrA_{k}\mid\scrF_{k}]$ and $W_{k}:=\scrB_{k}-\Ex[\scrB_{k}\mid\scrG_{k}]$. Note that Standing Assumption \ref{ass:variance} implies that $\Ex[U_{k}\mid\scrF_{k}]=\Ex[W_{k}\mid\scrF_{k}]=0$ and that $\left(\Ex[\norm{U_{k}}^{2}_{\Psi^{-1}}\mid\scrF_{k}]\right)_{k\geq 1}$ and $\left(\Ex[\norm{W_{k}}^{2}_{\Psi^{-1}}\mid\scrF_{k}]\right)_{k\geq 1}$ are summable sequences.

Define the residual function for the monotone inclusion as
$r_{\Psi}(x)= \norm{x-\mathrm{J}_{\Psi^{-1} T}(x-\Psi^{-1} V(x))}.$
For every $\Psi\succ0$, $x\in\zer(V+T)\iff r_{\Psi}(x)=0$.

\begin{lemma}\label{lem:ab}
For $x,y\in\setX$ and $\alpha,\beta\geq 0$ with $\alpha+\beta=1$, it holds that 
$\norm{\alpha x+\beta y}^2 = \alpha\norm{x}^2 + \beta\norm{y}^2- \alpha \beta \norm{x-y}^2.$
\end{lemma}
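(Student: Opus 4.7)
The plan is to prove this classical convexity identity by direct expansion of the inner product, using the normalization $\alpha+\beta=1$ to rewrite the mixed coefficients.

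First I would expand the squared norm bilinearly as
\[
\norm{\alpha x+\beta y}^2 = \alpha^2\norm{x}^2 + 2\alpha\beta\langle x,y\rangle + \beta^2\norm{y}^2.
\]
Next I would exploit the constraint $\alpha+\beta=1$ to substitute $\alpha^2 = \alpha(1-\beta) = \alpha-\alpha\beta$ and similarly $\beta^2=\beta-\alpha\beta$. Collecting the $\alpha\beta$ terms then gives
\[
\norm{\alpha x+\beta y}^2 = \alpha\norm{x}^2 + \beta\norm{y}^2 - \alpha\beta\bigl(\norm{x}^2 - 2\langle x,y\rangle + \norm{y}^2\bigr),
\]
and I would recognize the bracketed expression as $\norm{x-y}^2$, yielding the claimed identity.

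There is no real obstacle here: the identity is a two-line computation and the assumption $\alpha,\beta\geq 0$ is not even used in the algebraic derivation (it is only relevant when the right-hand side is interpreted as a strong-convexity-type decomposition of the squared norm). For completeness, I would remark that the identity extends verbatim to any inner-product norm, in particular to the $\Psi$-weighted norm used throughout the paper, which is the form in which it will be invoked in the convergence analysis of Section~\ref{sec:RISFBF}.
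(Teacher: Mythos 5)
Your proof is correct: the paper states Lemma~\ref{lem:ab} without proof (it is the standard convexity identity for inner-product norms), and your direct bilinear expansion with the substitution $\alpha^2=\alpha-\alpha\beta$, $\beta^2=\beta-\alpha\beta$ is exactly the canonical argument one would supply. Your closing remark is also on point, since the lemma is indeed invoked in the $\Psi$-weighted norm $\norm{\cdot}_{\Psi}$ (with $\alpha=1-\rho_k$, $\beta=\rho_k$) in the proof of the fundamental recursion, and the identity holds verbatim for any inner-product-induced norm.
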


\begin{figure}[t]
\begin{center}
\includegraphics[width=\columnwidth]{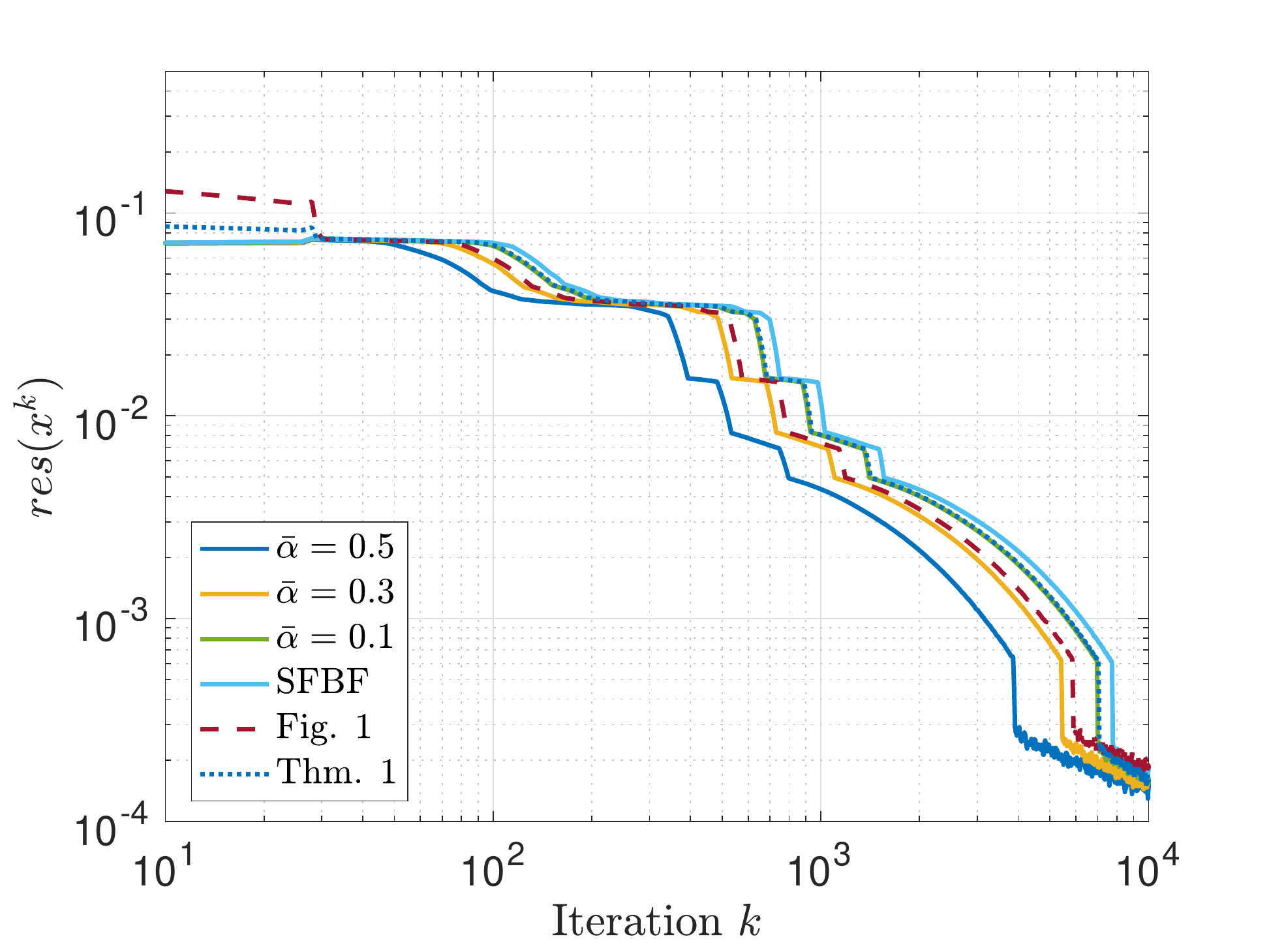}
\end{center}
\caption{Residual distance of the primal variable form the solution for the RISFBF algorithm varying the inertial parameter.}\label{fig_alpha}
\end{figure}

\begin{lemma}[Robbins-Siegmund]\cite[Lemma 11, page 50]{Pol87}.
\label{lem:RS}
Let $(\Omega,\scrF,\F=(\scrF_{k})_{k\geq 0},\Pr)$ be a discrete stochastic basis. 
Let $(\alpha_k)_{k\in\NN}$, $(\theta_k)_{k\in\NN}$, $(\eta_k)_{k\in\NN}$ and $(\chi_k)_{k\in\NN}$ be non-negative processes such that $\sum_k\eta_k<\infty$, $\sum_k\chi_k<\infty$ and let
$$\forall k\in\NN, \quad \EE[\alpha_{k+1}|\mc F_k]+\theta_k\leq (1+\chi_k)\alpha_k+\eta_k \quad a.s.$$
Then $\sum_k \theta_k<\infty$ and $(\alpha_k)_{k\in\NN}$ converges a.s. to a non negative random variable.
\end{lemma}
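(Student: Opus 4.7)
The plan is to (i) absorb the multiplicative factor $1+\chi_k$ through a predictable change of variables, (ii) extract an honest supermartingale from the recursion, and (iii) invoke Doob's supermartingale convergence theorem after a localisation that bypasses the fact that summability of the perturbations is assumed only almost surely, not in expectation.

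First, I would introduce the predictable product $a_k := \prod_{j=0}^{k-1}(1+\chi_j)$ with $a_0 := 1$. Because $\sum_k \chi_k < \infty$ a.s.\ and $1+x \leq e^x$ for $x \geq 0$, the sequence $(a_k)$ converges a.s.\ to a finite, strictly positive random variable $a_\infty$. Setting $\tilde\alpha_k := \alpha_k/a_k$, $\tilde\theta_k := \theta_k/a_{k+1}$, and $\tilde\eta_k := \eta_k/a_{k+1}$, and noting that $a_{k+1} = a_k(1+\chi_k)$ is $\mc F_k$-measurable and strictly positive, dividing the hypothesis by $a_{k+1}$ yields the cleaner recursion
\[
\EE[\tilde\alpha_{k+1}\mid\mc F_k] + \tilde\theta_k \leq \tilde\alpha_k + \tilde\eta_k \quad\text{a.s.},
\]
while $a_{k+1}\geq 1$ gives $\sum_k \tilde\eta_k \leq \sum_k \eta_k < \infty$ a.s.

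Second, I would define $N_k := \tilde\alpha_k + \sum_{j<k}(\tilde\theta_j - \tilde\eta_j)$ and check by a one-line substitution that $\EE[N_{k+1}\mid\mc F_k]\leq N_k$, so that $(N_k)$ is an $\F$-supermartingale. To apply Doob's a.s.\ convergence theorem I need a uniformly integrable lower bound on $N_k^-$, which in general is not available since $\sum_k \tilde\eta_k$ need not be integrable. For this I would introduce the stopping times $\tau_m := \inf\{k\geq 0:\sum_{j=0}^{k}\tilde\eta_j>m\}$, which increase to $+\infty$ a.s. The stopped supermartingale $(N_{k\wedge\tau_m})_k$ then satisfies $N_{k\wedge\tau_m}^-\leq\sum_{j<k\wedge\tau_m}\tilde\eta_j\leq m$, so it converges a.s.\ to a finite limit by Doob. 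Since on an a.s.\ event there exists a (random) $m$ with $\tau_m=\infty$, the a.s.\ limit passes to $(N_k)$ itself.

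Finally, I would decompose $N_k = \bigl(\tilde\alpha_k + \sum_{j<k}\tilde\theta_j\bigr) - \sum_{j<k}\tilde\eta_j$. The subtracted term converges a.s.\ by assumption, so the bracket converges a.s.\ to a finite random variable; since $\sum_{j<k}\tilde\theta_j$ is monotone in $[0,\infty]$ and $\tilde\alpha_k\geq 0$, this forces simultaneously $\sum_k\tilde\theta_k<\infty$ a.s.\ and a.s.\ convergence of $\tilde\alpha_k$ to a non-negative random variable. Multiplying by $a_k \to a_\infty \in (0,\infty)$ a.s.\ then gives a.s.\ convergence of $\alpha_k$ and summability of $\theta_k$. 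The main obstacle I anticipate is the second step: choosing a stopping time that neutralises the lack of integrability of $\sum_k \eta_k$ while leaving the stopped process bounded below by an integrable constant, and verifying that $\tau_m \uparrow \infty$ a.s.\ so that the localised convergence can be un-localised in the limit.
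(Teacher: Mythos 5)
This lemma is not proved in the paper at all: it is imported verbatim from Polyak's book (Robbins--Siegmund), so there is no in-paper argument to compare against. Your proposal is, in substance, the classical proof of that result: dividing by the predictable product $a_k=\prod_{j<k}(1+\chi_j)$ to absorb the multiplicative factor, forming the almost-supermartingale $N_k=\tilde\alpha_k+\sum_{j<k}(\tilde\theta_j-\tilde\eta_j)$, localising with the stopping times $\tau_m$ built from the partial sums of $\tilde\eta$, applying Doob's convergence theorem to the stopped process, un-localising, and reading off both conclusions from the monotonicity of $\sum_{j<k}\tilde\theta_j$; the translation back via $a_k\to a_\infty\in[1,\infty)$ is also correct (note $\theta_k=a_{k+1}\tilde\theta_k\le a_\infty\tilde\theta_k$, so summability transfers). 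The one point you should tighten is integrability: the lemma as stated assumes only non-negativity and almost sure summability, so neither $N_k$ nor the stopped process $N_{k\wedge\tau_m}$ is known to be integrable, and your $\tau_m$ controls only the negative part $N_{k\wedge\tau_m}^-\le m$; Doob's theorem, as you invoke it, presupposes a genuine (integrable) supermartingale. The fix is routine and worth stating: either read the hypotheses with integrable processes, or add a second localisation, e.g.\ multiply $N_{k\wedge\tau_m}+m\ge 0$ by $\mathbf{1}_{\{\tilde\alpha_0\le c\}}\in\mc F_0$, so that by induction its expectation stays below $c+m$ and the non-negative supermartingale convergence theorem applies; letting $c\to\infty$ and then $m\to\infty$ recovers your conclusion on a set of full measure. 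You should also make explicit that $\chi_k,\eta_k,\theta_k$ are $\mc F_k$-measurable (adapted), which is what makes $a_{k+1}$ an $\mc F_k$-measurable divisor and $\tau_m$ a stopping time. With those two clarifications the argument is complete and matches the standard proof cited by the paper.
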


\begin{lemma}\label{lem:YZG}
For all $k\geq 1$ we have 
\begin{equation}\label{eq:YZG}
-\norm{Z_{k}-Y_{k}}^{2}_{\Psi}\leq \norm{U_{k}}^{2}_{\Psi^{-1}}-\tfrac{1}{2}r^{2}_{\Psi}(Z_{k}).
\end{equation}
\end{lemma}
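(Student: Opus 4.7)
The plan is to compare the stochastic proximal step $Y_k = \mathrm{J}_{\Psi^{-1}T}(Z_k - \Psi^{-1}\scrA_k)$ against its noise-free counterpart and use the nonexpansiveness of the resolvent in the $\Psi$-induced inner product. To this end, introduce the auxiliary iterate
\[
\tilde{Y}_k := \mathrm{J}_{\Psi^{-1}T}\bigl(Z_k - \Psi^{-1}V(Z_k)\bigr),
\]
so that $r_\Psi(Z_k) = \|Z_k - \tilde{Y}_k\|_\Psi$ by the definition of the residual map (interpreted in the $\Psi$-norm consistent with the rest of the statement). Since $\scrA_k = \hat V_k(Z_k,\xi_k)$ is a conditionally unbiased estimator of $V(Z_k)$, Standing Assumption \ref{ass:variance} gives $\Ex[\scrA_k\mid \scrF_k] = V(Z_k)$, hence $U_k = \scrA_k - V(Z_k)$.

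Next, I would invoke the fact that $\mathrm{J}_{\Psi^{-1}T}$ is firmly (hence $1$-)nonexpansive in the $\Psi$-norm, since $\Psi^{-1}T$ is maximally monotone in the $\Psi$-inner product by Lemma~\ref{lemma_op}(ii). Applied to $Y_k$ and $\tilde{Y}_k$, this yields
\[
\|Y_k - \tilde{Y}_k\|_\Psi \;\leq\; \bigl\|\Psi^{-1}(V(Z_k)-\scrA_k)\bigr\|_\Psi \;=\; \|U_k\|_{\Psi^{-1}},
\]
where the equality uses $\|\Psi^{-1}v\|_\Psi^2 = \langle \Psi \Psi^{-1}v,\Psi^{-1}v\rangle = \langle v,\Psi^{-1}v\rangle = \|v\|_{\Psi^{-1}}^2$.

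With this bound in hand, I would combine it with the triangle inequality in the $\Psi$-norm,
\[
r_\Psi(Z_k) \;=\; \|Z_k - \tilde Y_k\|_\Psi \;\leq\; \|Z_k - Y_k\|_\Psi + \|Y_k - \tilde Y_k\|_\Psi \;\leq\; \|Z_k-Y_k\|_\Psi + \|U_k\|_{\Psi^{-1}},
\]
and square using the elementary inequality $(a+b)^2 \leq 2a^2+2b^2$, giving
\[
\tfrac{1}{2}r_\Psi^{\,2}(Z_k) \;\leq\; \|Z_k - Y_k\|_\Psi^{\,2} + \|U_k\|_{\Psi^{-1}}^{\,2}.
\]
Rearranging yields exactly \eqref{eq:YZG}.

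The only delicate point, and what I would treat as the ``main obstacle'', is justifying the nonexpansiveness of $\mathrm{J}_{\Psi^{-1}T}$ with respect to the $\Psi$-induced metric rather than the Euclidean one; this reduces to the standard observation that $\Psi^{-1}T$ is maximally monotone when $\RR^{n+2mN}$ is equipped with the inner product $\langle\cdot,\cdot\rangle_\Psi$, so its resolvent is firmly nonexpansive in $\|\cdot\|_\Psi$ in the sense of \cite[Prop.~23.7]{BauCom16}. All remaining steps are short manipulations and do not require probabilistic arguments, since the inequality is pathwise.
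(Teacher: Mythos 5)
Your proposal is correct and follows essentially the same route as the paper's own proof: both compare $Y_k$ with the noise-free iterate $\tilde Y_k=\mathrm{J}_{\Psi^{-1}T}(Z_k-\Psi^{-1}V(Z_k))$, use nonexpansiveness of the resolvent in the $\Psi$-norm together with $\|\Psi^{-1}U_k\|_\Psi=\|U_k\|_{\Psi^{-1}}$, and conclude via the elementary bound $\tfrac12\|a+b\|^2\le\|a\|^2+\|b\|^2$ (your triangle-inequality-plus-squaring step is the same estimate). Your reading of $r_\Psi$ in the $\Psi$-norm also matches how the paper uses it in its proof.
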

\begin{proof}
By definition 
\begin{align*}
&\tfrac{1}{2}r_{\Psi}^{2}(Z_{k})=\tfrac{1}{2}\norm{Z_{k}-\mathrm{J}_{\Psi^{-1} T}(Z_{k}-\Psi^{-1}V(Z_{k})}^{2}\\
&\leq \norm{Z_{k}-Y_{k}}^{2}_{\Psi}\\
&+\norm{\mathrm{J}_{\Psi^{-1}T}(Z_{k}-\Psi^{-1}\scrA_{k})-\mathrm{J}_{\Psi^{-1} T}(Z_{k}-\Psi^{-1}V(Z_{k}))}^{2}_{\Psi}\\
&\leq \norm{Z_{k}-Y_{k}}^{2}_{\Psi}+\norm{U_{k}}^{2}_{\Psi^{-1}}
\end{align*}
where the last inequality uses the non-expansiveness of the resolvent operator $J_{\Psi^{-1}T}$ under the norm $\norm{\cdot}_{\Psi}$. 
\end{proof}

\subsection{Convergence analysis of RISFBF algorithm}
\label{sec:RISFBF}
Define the stochastic processes
\begin{align}
&\Delta M_{k}:= \textstyle{\frac{\ssc{(3-\nu)}\rho_{k}}{1+\ell_{V,\Psi}} \norm{\ce_{k}}^{2}+\textstyle{\ssc{\nu}\rho_{k}}\norm{U_{k}}^{2}_{\Psi^{-1}}},\label{eq:M}\\
&\Delta N_{k}(p):= 2\rho_{k}\inner{W_{k},p-Y_{k}},
\label{eq:N}
\end{align}
with ${\ce_{k}}:= W_{k}-U_{k}$.
We start proving the following fundamental inequality.
\begin{lemma}[Fundamental Recursion]
Fix $p\in\zer(V+T)$ arbitrary, and set $R_{k}= Y_{k}+\Psi^{-1}(\scrA_{k}-\scrB_{k})$. For all $k\geq 0$, it holds true that 
\begin{align*}
&\norm{X_{k+1}-p}^{2}_{\Psi}\leq (1+\alpha_{k})\norm{X_{k}-p}^{2}_{\Psi}-\alpha_{k}\norm{X_{k-1}-p}^{2}_{\Psi}\\
&+\Delta M_{k}+\Delta N_{k}(p)-\textstyle{\frac{\ssc{\nu}\rho_{k}}{\ssc{2}}}r^{2}_{\Psi}(Z_{k})\\
&+\alpha_{k}\norm{X_{k}-X_{k-1}}^{2}_{\Psi}\textstyle{\left(2\alpha_{k}+\frac{\ssc{3-\nu}(1-\alpha_{k})}{\ssc{2}\rho_{k}(1+\ell_{V,\Psi})}\right)}\\
&-(1-\alpha_{k})\textstyle{\left(\frac{\ssc{3-\nu}}{\ssc{2}\rho_{k}(1+\ell_{V,\Psi})}-1\right)}\norm{X_{k+1}-X_{k}}^{2}_{\Psi}.
\end{align*}
\end{lemma}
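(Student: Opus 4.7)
The plan is to expand $\norm{X_{k+1}-p}_\Psi^2$ forward through the three updates, use the resolvent characterization of $Y_k$ together with monotonicity to discharge the forward step, inject the inertial expansion for $Z_k-p$, and finally convert whatever remains of $\norm{Z_k-Y_k}_\Psi^2$ and $\norm{\scrB_k-\scrA_k}_{\Psi^{-1}}^2$ into the $\norm{X_{k+1}-X_k}_\Psi^2$ penalty and the stochastic bookkeeping $\Delta M_k, \Delta N_k(p)$. First, writing $R_k = Y_k + \Psi^{-1}(\scrA_k-\scrB_k)$ so that $X_{k+1}=(1-\rho_k)Z_k+\rho_k R_k$, Lemma \ref{lem:ab} (valid for any signed weights summing to one) combined with the identity $X_{k+1}-Z_k=\rho_k(R_k-Z_k)$ yields
\[
\norm{X_{k+1}-p}_\Psi^2 = (1-\rho_k)\norm{Z_k-p}_\Psi^2 + \rho_k\norm{R_k-p}_\Psi^2 - \tfrac{1-\rho_k}{\rho_k}\norm{X_{k+1}-Z_k}_\Psi^2.
\]

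Next I would bound $\norm{R_k-p}_\Psi^2$. From the resolvent inclusion $\Psi(Z_k-Y_k)-\scrA_k\in T(Y_k)$ and $-V(p)\in T(p)$, monotonicity of $T$ gives $\inner{Z_k-Y_k,Y_k-p}_\Psi\geq \inner{\scrA_k-V(p),Y_k-p}$. Expanding $\norm{R_k-p}_\Psi^2 = \norm{Y_k-p}_\Psi^2 + 2\inner{Y_k-p,\scrA_k-\scrB_k} + \norm{\scrA_k-\scrB_k}_{\Psi^{-1}}^2$, substituting $\scrA_k = V(Z_k)+U_k$ and $\scrB_k = V(Y_k)+W_k$, and discarding $\inner{V(Y_k)-V(p),Y_k-p}\geq 0$ by monotonicity of $V$, the $U_k$-cross terms cancel and the $W_k$-cross term telescopes to $2\inner{W_k,p-Y_k}$; multiplying by $\rho_k$ produces $\Delta N_k(p)$ exactly. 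The inertial identity $Z_k-p=(1+\alpha_k)(X_k-p)-\alpha_k(X_{k-1}-p)$, fed into Lemma \ref{lem:ab} again, supplies $(1+\alpha_k)\norm{X_k-p}_\Psi^2 - \alpha_k\norm{X_{k-1}-p}_\Psi^2 + \alpha_k(1+\alpha_k)\norm{X_k-X_{k-1}}_\Psi^2$. To surface the residual I split $-\rho_k\norm{Z_k-Y_k}_\Psi^2 = -\nu\rho_k\norm{Z_k-Y_k}_\Psi^2 - (1-\nu)\rho_k\norm{Z_k-Y_k}_\Psi^2$ and apply Lemma \ref{lem:YZG} to the first piece, producing $\nu\rho_k\norm{U_k}_{\Psi^{-1}}^2 - \tfrac{\nu\rho_k}{2}r_\Psi^2(Z_k)$, which exactly matches the $U_k$-part of $\Delta M_k$ and the required residual term.

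The hard part will be the reduction of the leftover cluster
\[
-(1-\nu)\rho_k\norm{Z_k-Y_k}_\Psi^2 + \rho_k\norm{\scrB_k-\scrA_k}_{\Psi^{-1}}^2 - \tfrac{1-\rho_k}{\rho_k}\norm{X_{k+1}-Z_k}_\Psi^2
\]
into the announced quadratic form in $\norm{X_{k+1}-X_k}_\Psi^2$, $\norm{X_k-X_{k-1}}_\Psi^2$ and $\norm{\ce_k}^2$ with the $(1+\ell_{V,\Psi})$-calibrated coefficients. I would use the decomposition $\scrB_k-\scrA_k = (V(Y_k)-V(Z_k)) + \ce_k$ together with the $\Psi$-Lipschitz bound $\norm{V(Y_k)-V(Z_k)}_{\Psi^{-1}}\leq \ell_{V,\Psi}\norm{Z_k-Y_k}_\Psi$, and the finite-difference identity $X_{k+1}-Z_k = (X_{k+1}-X_k) - \alpha_k(X_k-X_{k-1})$. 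A Young inequality whose weight is tuned to $\ell_{V,\Psi}$ then absorbs the resulting $\ell_{V,\Psi}^2\norm{Z_k-Y_k}_\Psi^2$ contribution against the $(1-\nu)$-budget and the $\tfrac{1}{\rho_k}$ factor from the last summand, crystallising the coefficient $\tfrac{(3-\nu)\rho_k}{1+\ell_{V,\Psi}}$ on $\norm{\ce_k}^2$ and the $(A-1)$-weighted contributions on the inertial differences with $A=\tfrac{3-\nu}{2\rho_k(1+\ell_{V,\Psi})}$. The delicate bookkeeping lies entirely in matching these coefficients exactly; the remainder of the proof is a straightforward reassembly of all the intermediate bounds.
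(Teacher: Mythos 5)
Your proposal is correct and follows essentially the same route as the paper's proof: the Lemma~\ref{lem:ab} split of the relaxation step with $X_{k+1}-Z_k=\rho_k(R_k-Z_k)$, the resolvent inclusion $\Psi(Z_k-Y_k)-\scrA_k\in T(Y_k)$ combined with monotonicity of $T$ and $V$ to reduce $\norm{R_k-p}^2_\Psi$ and leave only $2\inner{W_k,p-Y_k}$, the $\nu/(1-\nu)$ split with Lemma~\ref{lem:YZG} to surface $r^2_\Psi(Z_k)$, the inertial identities for $Z_k-p$ and $X_{k+1}-Z_k$, and the $(1+\ell_{V,\Psi})$-calibrated triangle/Lipschitz estimate of $\norm{R_k-Z_k}_\Psi$ to produce the coefficients $\tfrac{(3-\nu)\rho_k}{1+\ell_{V,\Psi}}$ and $\tfrac{3-\nu}{2\rho_k(1+\ell_{V,\Psi})}-1$. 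The only differences are cosmetic (you expand $\norm{R_k-p}^2_\Psi$ and defer the Lipschitz bound on $\scrA_k-\scrB_k$, whereas the paper expands $\norm{Z_k-p}^2_\Psi$ and applies it up front), so the remaining work is exactly the coefficient bookkeeping you describe.
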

\begin{proof}
Start by observing 
\begin{align*}
\norm{Z_{k}-p}^{2}_{\Psi}&=\norm{Z_{k}-Y_{k}+Y_{k}-R_{k}+R_{k}-p}^{2}_{\Psi}\\
                         &=\norm{Z_{k}-Y_{k}}^{2}_{\Psi} \us{ \ - \ } \norm{Y_{k}-R_{k}}^{2}_{\Psi}+\norm{R_{k}-p}^{2}_{\Psi}\\
&+2\inner{Z_{k}-R_{k},Y_{k}-p}_{\Psi}.
\end{align*}
Since 
\begin{align} \notag
\norm{Y_{k}-R_{k}}^{2}_{\Psi}&=\norm{\Psi^{-1}(\scrA_{k}-\scrB_{k})}^{2}_{\Psi}\\
&= \norm{V(Y_{k})-V(Z_{k})+W_{k}-U_{k}}^{2}_{\Psi^{-1}} \notag\\
&\leq 2\ell^{2}_{V,\Psi}\norm{Y_{k}-Z_{k}}^{2}_{\Psi}+2\norm{W_{k}-U_{k}}^{2}_{\Psi^{-1}} \notag\\
&=2\ell^{2}_{V,\Psi}\norm{Y_{k}-Z_{k}}^{2}_{\Psi}+2\norm{\us{\ce_{k}}}^{2}_{\Psi^{-1}}.\label{bd-YkRk}
\end{align}
Hence, 
\begin{align*}
    \norm{Z_{k}-p}^{2}_{\Psi}&\us{\overset{\eqref{bd-YkRk}}{\geq}} (1-2\ell^{2}_{V,\Psi})\norm{Z_{k}-Y_{k}}^{2}_{\Psi}-2\norm{\us{\ce_{k}}}^{2}_{\Psi^{-1}}\\
&+\norm{R_{k}-p}^{2}_{\Psi}+2\inner{Z_{k}-R_{k},Y_{k}-p}_{\Psi}.
\end{align*}
\us{Using the definition $Y_{k} \us{ \triangleq \ } \mathrm{J}_{\Psi^{-1}T}(Z_{k}-\Psi^{-1}\scrA_{k})$, we get 
\begin{align*}
    Y_k + \Psi^{-1} T(Y_k) & \ni (Z_k - \Psi^{-1} \scrA_k) \\
    \mbox{ or }
    T(Y_k) & \ni \Psi ( Z_k - Y_k - \Psi^{-1} \scrA_k) 
\end{align*}
Since $p \in \zer(T+V)$,  $(p,{\bf 0}) \in \mbox{gr}(T+V)$, implying that ${\bf 0} - V(p) \in  T(p)$. Consequently, by monotonicity of $T$, we have that 
\begin{align*}
    \inner{\Psi ( Z_k - Y_k - \Psi^{-1} \scrA_k)+V(p), Y_k - p} \geq 0 \\ 
    \mbox{ or } \inner{Z_k-Y_k-\Psi^{-1}(\scrA_k - \scrB_k),Y_k-p}_{\Psi} \\
    \geq \inner{V(Y_k) - V(p),Y_k-p} + \inner{\scrB_k - V(Y_k), Y_k-p}.  
\end{align*}
By definition, $R_k = Y_k+\Psi^{-1}(\scrA_k - \scrB_k)$, $W_k = \scrB_k  - V(Y_k)$, and $V(Y_k) =  \mathbb{E}[\scrB_k \mid \scrG_k]$, we have that 
}
\begin{align*}
\inner{Z_{k}-R_{k},Y_{k}-p}_{\Psi}&\geq \inner{V(Y_{k})-V(p),Y_{k}-p}\\
&+\inner{W_{k},Y_{k}-p}
\end{align*}
Since $V$ is a monotone operator, this implies $\inner{Z_{k}-R_{k},Y_{k}-p}_{\Psi}\geq \inner{W_{k},Y_{k}-p}.$
Whence, 
\begin{align*}
\norm{Z_{k}-p}^{2}_{\Psi}\geq &(1-2\ell^{2}_{V,\Psi})\norm{Y_{k}-Z_{k}}^{2}_{\Psi}+\norm{R_{k}-p}^{2}_{\Psi}\\
&-2\norm{\us{\ce_{k}}}^{2}_{\Psi^{-1}}+2\inner{W_{k},Y_{k}-p}.
\end{align*}
Rearranging, we arrive at 
\begin{equation}\label{eq:Rk}
\begin{array}{l}
    \norm{R_{k}-p}^{2}_{\Psi}\leq \norm{Z_{k}-p}^{2}_{\Psi}+2\norm{\us{\ce_{k}}}^{2}_{\Psi^{-1}}\\
-(1-2\ell^{2}_{V,\Psi})\norm{Y_{k}-Z_{k}}^{2}_{\Psi}+2\inner{W_{k},p-Y_{k}}.
\end{array}
\end{equation}
Next, we use Lemma \ref{lem:ab} to arrive at  
\begin{align*}
&\norm{X_{k+1}-p}^{2}_{\Psi}=\norm{(1-\rho_{k})Z_{k}+\rho_{k}R_{k}-p}^{2}_{\Psi}\\
&=(1-\rho_{k})\norm{Z_{k}-p}^{2}_{\Psi}+\rho_{k}\norm{R_{k}-p}^{2}_{\Psi}\\
&-\rho_{k}(1-\rho_{k})\norm{R_{k}-Z_{k}}^{2}_{\Psi}\\
&=(1-\rho_{k})\norm{Z_{k}-p}^{2}_{\Psi}+\rho_{k}\norm{R_{k}-p}^{2}_{\Psi}-\textstyle{\textstyle{\frac{1-\rho_{k}}{\rho_{k}}}\norm{X_{k+1}-Z_{k}}^{2}_{\Psi}}\\
&\leq \norm{Z_{k}-p}^{2}_{\Psi}-\textstyle{\frac{1-\rho_{k}}{\rho_{k}}}\norm{X_{k+1}-Z_{k}}^{2}_{\Psi}+2\lambda^{2}\rho_{k}\norm{\ce_{k}}^{2}_{\Psi^{-1}}\\
&-\rho_{k}(1-2\ell^{2}_{V,\Psi})\norm{Z_{k}-Y_{k}}^{2}_{\Psi}-2\rho_{k}\inner{W_{k},Y_{k}-p}\\
&=\norm{Z_{k}-p}^{2}_{\Psi}-\textstyle{\textstyle{\frac{1-\rho_{k}}{\rho_{k}}}}\norm{X_{k+1}-Z_{k}}^{2}_{\Psi}-2\rho_{k}\inner{W_{k},Y_{k}-p}\\
&-\rho_{k}(\ssc{(1-\nu)}-2\ell^{2}_{V,\Psi})\norm{Z_{k}-Y_{k}}^{2}_{\Psi}-\textstyle{\textstyle{\ssc{\nu\rho_k}}\norm{Y_{k}-Z_{k}}^{2}_{\Psi}}\\
&+2\rho_{k}\norm{\ce_{k}}^{2}_{\Psi^{-1}}.
\end{align*}
Using \eqref{eq:YZG}, this implies 
\begin{align*}
&\norm{X_{k+1}-p}^{2}_{\Psi}\leq \norm{Z_{k}-p}^{2}_{\Psi}-\textstyle{\frac{1-\rho_{k}}{\rho_{k}}}\norm{X_{k+1}-Z_{k}}^{2}_{\Psi}\\
&-\rho_{k}(\ssc{(1-\nu)}-2\ell^{2}_{V,\Psi})\norm{Z_{k}-Y_{k}}^{2}_{\Psi}-\textstyle{\ssc{\frac{\nu\rho_{k}}{2}}}r^{2}_{\Psi}(Z_{k})\\
&-2\rho_{k}\inner{W_{k},Y_{k}-p}+2\rho_{k}\norm{\ce_{k}}^{2}_{\Psi^{-1}}+\textstyle{\ssc{\nu}\rho_{k}}\norm{U_{k}}^{2}_{\Psi^{-1}}.
\end{align*}
Furthermore,
\begin{align*}
\tfrac{1}{\rho_{k}}\norm{X_{k+1}-Z_{k}}_{\Psi} &=\norm{R_{k}-Z_{k}}_{\Psi}\\
&\leq \norm{\scrB_{k}-\scrA_{k}}_{\Psi^{-1}}+\norm{Y_{k}-Z_{k}}_{\Psi}\\
&\leq(1+ \ell_{V,\Psi})\norm{Y_{k}-Z_{k}}_{\Psi}+\norm{\ce_{k}}_{\Psi^{-1}},
\end{align*}
which implies
\begin{align*}
\tfrac{1}{2\rho^{2}_{k}}\norm{X_{k+1}-Z_{k}}^{2}_{\Psi}\leq (1+\ell_{V,\Psi})^{2}\norm{Y_{k}-Z_{k}}^{2}_{\Psi}+\norm{\ce_{k}}^{2}_{\Psi^{-1}}.
\end{align*}
Multiplying both sides by $\frac{\rho_{k}(\ssc{1-\nu}-2\ell_{V,\Psi})}{1+\ell_{V,\Psi}}$, we obtain 
\begin{align*}
&\textstyle{\frac{\ssc{1-\nu}-2\ell_{V,\Psi}}{2\rho_{k}}(1+\ell_{V,\Psi})}\norm{X_{k+1}-Z_{k}}^{2}_{\Psi}\\
&\leq\rho_{k}(\ssc{1-\nu}-2\ell_{V,\Psi})(1+\ell_{V,\Psi})\norm{Y_{k}-Z_{k}}^{2}_{\Psi}\\
&+\textstyle{\frac{\rho_{k}(\ssc{1-\nu}-2\ell_{V,\Psi})}{1+\ell_{V,\Psi}}}\norm{\ce_{k}}^{2}_{\Psi^{-1}}.
\end{align*}
Rearranging terms, and noting that $(\ssc{1-\nu}-2\ell_{V,\Psi})(1+\ell_{V,\Psi})\leq \ssc{1-\nu}-2\ell^{2}_{V,\Psi}$, the above estimate becomes
\begin{align*}
&-\rho_{k}(\ssc{1-\nu}-2\ell^{2}_{V,\Psi})\norm{Y_{k}-Z_{k}}^{2}_{\Psi}\\
&\leq -\textstyle{\frac{\ssc{1-\nu}-2\ell_{V,\Psi}}{2\rho_{k}}(1+\ell_{V,\Psi})}\norm{X_{k+1}-Z_{k}}^{2}_{\Psi}+\textstyle{\frac{\rho_{k}(\ssc{1-\nu}-2\ell_{V,\Psi}\Psi)}{1+\ell_{V,\Psi}}}\norm{\ce_{k}}^{2}_{\Psi^{-1}}.
\end{align*}
Substituting this bound into the first majorization of the anchor process $\norm{X_{k+1}-p}^{2}_{\Psi}$, we see
\begin{align*}
&\norm{X_{k+1}-p}^{2}_{\Psi}\leq \norm{Z_{k}-p}^{2}_{\Psi}+\textstyle{\ssc{\nu}\rho_{k}}\norm{U_{k}}^{2}_{\Psi^{-1}}-\textstyle{\frac{\ssc{\nu}\rho_{k}}{\ssc{2}}}r^{2}_{\Psi}(Z_{k})\\
&-\left(\textstyle{\frac{1-\rho_{k}}{\rho_{k}}}+\frac{\ssc{1-\nu}-2\ell_{V,\Psi}}{2\rho_{k}(1+\ell_{V,\Psi})}\right)\norm{X_{k+1}-Z_{k}}^{2}_{\Psi}\\
&+\rho_{k}\norm{\ce_{k}}^{2}_{\Psi^{-1}}\textstyle{\left(2+\frac{\ssc{1-\nu}-2\ell_{V,\Psi}}{1+\ell_{V,\Psi}}\right)}-2\rho_{k}\inner{W_{k},Y_{k}-p}.
\end{align*}
Observe that 
\begin{align}\label{eq:Xk1}
\norm{X_{k+1}-Z_{k}}^{2}_{\Psi}&\geq (1-\alpha_{k})\norm{X_{k+1}-X_{k}}^{2}_{\Psi}\nonumber\\
&+(\alpha^{2}_{k}-\alpha_{k})\norm{X_{k}-X_{k-1}}^{2}_{\Psi},
\end{align} 
\begin{align}\label{eq:Z1}
\norm{Z_{k}-p}^{2}_{\Psi}&=(1+\alpha_{k})\norm{X_{k}-p}^{2}_{\Psi}-\alpha_{k}\norm{X_{k-1}-p}^{2}_{\Psi}\nonumber\\
&+\alpha_{k}(1+\alpha_{k})\norm{X_{k}-X_{k-1}}^{2}_{\Psi}.
\end{align}
Choose parameters $\alpha_{k}$ and $\rho_{k}$ such that $\frac{\ssc{3-\nu}-2\rho_{k}(1+\ell_{V,\Psi})}{2\rho_{k}(1+\ell_{V,\Psi})}>0$. Then, using both of these relations in the last estimate for $\norm{X_{k+1}-p}^{2}_{\Psi}$, we arrive at 
\begin{align*}
&\norm{X_{k+1}-p}^{2}_{\Psi}\leq (1+\alpha_{k})\norm{X_{k}-p}^{2}_{\Psi}-\alpha_{k}\norm{X_{k-1}-p}^{2}_{\Psi}\\
&+\alpha_{k}(1+\alpha_{k})\norm{X_{k}-X_{k-1}}^{2}_{\Psi}-2\rho_{k}\inner{W_{k+1},Y_{k}-p}\\
&-\textstyle{\frac{\nu\rho_{k}}{2}}r^{2}_{\Psi}(Z_{k})+\frac{(3-\nu)\rho_{k}}{1+\ell_{V,\Psi}}\norm{\ce_{k}}^{2}_{\Psi^{-1}}+\textstyle{\nu\rho_{k}}\norm{U_{k}}^{2}_{\Psi^{-1}}\\
&-\textstyle{\left(\frac{3-\nu}{2\rho_{k}(1+\ell_{V,\Psi})}-1\right)}[(1-\alpha_{k})\norm{X_{k+1}-X_{k}}^{2}_{\Psi}\\
&+(\alpha^{2}_{k}-\alpha_{k})\norm{X_{k}-X_{k-1}}^{2}_{\Psi}].
\end{align*}
Using the respective definitions of the stochastic increments $\Delta M_{k+1},\Delta N_{k}(p)$ in \eqref{eq:M} and \eqref{eq:N}, we arrive at
\begin{align*}
&\norm{X_{k+1}-p}^{2}_{\Psi}\leq (1+\alpha_{k})\norm{X_{k}-p}^{2}_{\Psi}-\alpha_{k}\norm{X_{k-1}-p}^{2}_{\Psi}\\
&+\Delta M_{k}+\Delta N_{k}(p)-\textstyle{\frac{\nu\rho_{k}}{2}}r^{2}_{\Psi}(Z_{k})\\
&+\alpha_{k}\norm{X_{k}-X_{k-1}}^{2}_{\Psi}\textstyle{\left(2\alpha_{k}+\frac{(3-\nu)(1-\alpha_{k})}{2\rho_{k}(1+\ell_{V,\Psi})}\right)}\\
&-(1-\alpha_{k})\textstyle{\left(\frac{3-\nu}{2\rho_{k}(1+\ell_{V,\Psi})}-1\right)}\norm{X_{k+1}-X_{k}}^{2}_{\Psi}
\end{align*}
\end{proof}
Rearranging the fundamental recursion, we see 
\begin{align*}
& \norm{X_{k+1}-p}^{2}_{\Psi}-\alpha_k\norm{X_{k}-p}^{2}_{\Psi}\\
&+(1-\alpha_k)\textstyle{\left(\frac{3-\nu}{2\rho_k(1+\ell_{V,\Psi})}-1\right)}\norm{X_{k+1}-X_{k}}^{2}_{\Psi}\\
&\leq \norm{X_{k}-p}_{\Psi}^{2}-\alpha_k\norm{X_{k-1}-p}^{2}_{\Psi}+\Delta M_{k}+\Delta N_{k}(p)\\
&+(1-\alpha_k)\textstyle{\left(\frac{3-\nu}{2\rho_k(1+\ell_{V,\Psi})}-1\right)}\norm{X_{k}-X_{k-1}}^{2}_{\Psi}-\tfrac{\nu\rho_{k}}{2}r^2_{\Psi}(Z_k)\\
&+\textstyle{\left(2\alpha_{k}^2+(1-\alpha_{k})\left(1-\tfrac{(3-\nu)(1-\alpha_k)}{2\rho_k(1+\ell_{V,\Psi})}\right)\right)}\norm{X_{k}-X_{k-1}}^{2}_{\Psi}.
\end{align*}
Suppose  $(\alpha_k)_{k}$ is a non-decreasing sequence satisfying  $0<\alpha_k\le\bar{\alpha}<1$ and 
$\rho_k = {\tfrac{(3-\nu)(1-\bar{\alpha})^2}{2(2\alpha_k^2-\alpha_k+1)(1+\ell_{V,\Psi})}}.$ Since $\rho_k\le\tfrac{(3-\nu)(1-\alpha_k)^2}{2(2\alpha_k^2-\alpha_k+1)(1+\ell_{V,\Psi})}$, we claim that 
\begin{align*}
&\norm{X_{k+1}-p}^{2}_{\Psi}-\alpha_{k}\norm{X_{k}-p}^{2}_{\Psi}\\
&+(1-\alpha_k)\left(\tfrac{3-\nu}{2\rho_k(1+\ell_{V,\Psi})}-1\right)\norm{X_{k+1}-X_{k}}^{2}_{\Psi}\\
&\ge \norm{X_{k+1}-p}^{2}_{\Psi}-\alpha_k\norm{X_{k}-p}^{2}_{\Psi}\\
&+(1-\alpha_k)\left(\tfrac{2\alpha_k^2-\alpha_k+1}{(1-\alpha_k)^2}-1\right)\norm{X_{k+1}-X_{k}}^{2}_{\Psi}\geq 0.
\end{align*}
To see this, observe that for any $\alpha>0$,
\begin{align*}
&\norm{X_{k+1}-p}_{\Psi}^{2}-\alpha\norm{X_k-p}^{2}_{\Psi} \\
&+(1-\alpha)\textstyle{\left(\frac{2\alpha^2-\alpha+1}{(1-\alpha)^2}-1\right)}\norm{X_{k+1}-X_k}_{\Psi}^{2}\\
&> \norm{X_{k+1}-p}^{2}_{\Psi}-\alpha\norm{X_k-p}^{2}_{\Psi} \\
&+(1-\alpha)\textstyle{\left(\frac{\alpha^2-\alpha+1}{(1-\alpha)^2}-1\right)}\norm{X_{k+1}-X_{k}}^{2}_{\Psi}\\
&= \norm{X_{k+1}-p}^{2}_{\Psi}-\alpha\norm{X_k-p}^{2}_{\Psi}\\
&-\left(\tfrac{\alpha^2-\alpha+1}{1-\alpha}-\tfrac{1-2 \alpha + \alpha^2}{1-\alpha}\right)\norm{X_{k+1}-X_{k}}^{2}_{\Psi} \\
&= (\alpha+(1-\alpha))\norm{X_{k+1}-p}^{2}_{\Psi}-\alpha\norm{X_{k}-p}^{2}_{\Psi} \\
&+\textstyle{\left(\alpha+\frac{\alpha^2}{1-\alpha}\right)}\norm{X_{k+1}-X_{k}}^{2}_{\Psi}\\
&\geq  \alpha\norm{X_{k+1}-p}^{2}_{\Psi}+\alpha\norm{X_{k+1}-X_{k}}^{2}_{\Psi}-\alpha\norm{X_{k}-p}^{2}_{\Psi} \\
&+2\alpha\norm{X_{k+1}-p}_{\Psi}\cdot \norm{X_{k+1}-X_{k}}_{\Psi}\\
&=  \alpha (\norm{X_{k+1}-p}_{\Psi}+\norm{X_{k+1}-X_{k}}_{\Psi})^2-\alpha\norm{X_k-p}^{2}_{\Psi}  \\
& \geq \alpha \norm{X_{k+1}-p+X_{k}-X_{k+1}}^{2}_{\Psi}-\alpha\norm{X_{k}-p}^{2}_{\Psi}= 0
\end{align*}
where the third inequality follows from Young's inequality. Under this specific coupling of the inertial and relaxation parameters, it holds that $2\alpha_{k}^2+(1-\alpha_{k})\left(1-\tfrac{(3-\nu)(1-\alpha_k)}{2\rho_k(1+\ell_{V,\Psi})}\right)\leq 0$. Now, let $H_{k}(p)= \norm{X_{k}-p}_{\Psi}^{2}-\alpha_k\norm{X_{k-1}-p}^{2}_{\Psi}+(1-\alpha_k)\left(\frac{3-\nu}{2\rho_k(1+\ell_{V,\Psi})}-1\right)\norm{X_{k}-X_{k-1}}^{2}_{\Psi}$, and $\delta_{k}= \tfrac{\nu\rho_{k}}{2}r^2_{\Psi}(Z_k)-\left(2\alpha_{k}^2+(1-\alpha_{k})\left(1-\tfrac{(3-\nu)(1-\alpha_k)}{2\rho_k(1+\ell_{V,\Psi})}\right)\right)\norm{X_{k}-X_{k-1}}^{2}_{\Psi}.$ Then, $(1-\alpha_{k+1})\left(\tfrac{3-\nu}{2\rho_{k+1}(1+\ell_{V,\Psi})}-1\right)\norm{X_{k+1}-X_{k}}^{2}_{\Psi} \leq (1-\alpha_{k})\left(\tfrac{3-\nu}{2\rho_{k}(1+\ell_{V,\Psi})}-1\right)\norm{X_{k+1}-X_{k}}^{2}_{\Psi}$. Therefore, for all $k\geq 0$, we conclude 
\begin{align*}
\Ex[H_{k+1}(p)\mid\scrF_{k}]\leq H_{k}(p)-\delta_{k}(p)+\Ex[\Delta M_{k}\mid\scrF_{k}].
\end{align*}
Using Standing Assumption \ref{ass:variance}, we deduce that $\left(\Ex[\Delta M_{k}\mid\scrF_{k}]\right)_{k\in\N}$ is summable, and thus we can apply Lemma \ref{lem:RS} to the above recursion. Hence, we readily deduce the existence of an a.s. finite limiting random variable $H_{\infty}(p)$ such that $\Pr\left(\lim_{k\to\infty}H_{k}(p)=H_{\infty}(p)\right)=1$ and $\sum_{k\in\NN}\delta_{k}(p))_{k\in\N}<\infty$. Therefore, there exists a measurable set $\Omega_{0}\in\scrF$ with $\Pr(\Omega_{0})=1$ such that for each $\omega\in\Omega_{0}$ it holds true that 
\begin{align*}
&\lim_{k\to\infty}\norm{X_{k}(\omega)-X_{k-1}(\omega)}_{\Psi}=0 \text{ and }\\
&\lim_{k\to\infty}\textstyle{\frac{\nu\rho_{k}}{2}}r^{2}_{\Psi}(Z_{k}(\omega))=0.
\end{align*}
Assuming that $\liminf_{k\to\infty}\rho_{k}>0$, we conclude that $\lim_{k\to\infty}r^{2}_{\Psi}(Z_{k})=0$ $\Pr$-a.s. Therefore, we conclude that $(X_{k})_{k\geq 1}$ converges a.s. to a limiting random variable with values in $\zer(V+T)$.

\section{Conclusion}
\label{sec:conclusion}

In the context of \us{shared constraint variants of} stochastic generalized Nash equilibrium problems, the convergence of the forward-backward-forward algorithm can be boosted via an integrated acceleration-relaxation procedure. In \us{the} presence of stochastic uncertainty, convergence can be proved assuming only monotonicity and Lipschitz continuity of the expected-valued operator. Specifically, our main result is the \us{claim of} almost sure global convergence of the trajectory of actions to the set of variational equilibria. In future research, we aim to investigate the question how \us{one may} relax monotonicity and Lipschitz continuity assumptions even further, \us{derive rate statements}, and \us{examine} how partial information \us{may} be introduced into the algorithm.


\bibliographystyle{IEEEtran}
\bibliography{IEEEabrv,mybib,BixBiblio}

\begin{thebibliography}{10}
\providecommand{\url}[1]{#1}
\csname url@samestyle\endcsname
\providecommand{\newblock}{\relax}
\providecommand{\bibinfo}[2]{#2}
\providecommand{\BIBentrySTDinterwordspacing}{\spaceskip=0pt\relax}
\providecommand{\BIBentryALTinterwordstretchfactor}{4}
\providecommand{\BIBentryALTinterwordspacing}{\spaceskip=\fontdimen2\font plus
\BIBentryALTinterwordstretchfactor\fontdimen3\font minus
  \fontdimen4\font\relax}
\providecommand{\BIBforeignlanguage}[2]{{%
\expandafter\ifx\csname l@#1\endcsname\relax
\typeout{** WARNING: IEEEtran.bst: No hyphenation pattern has been}%
\typeout{** loaded for the language `#1'. Using the pattern for}%
\typeout{** the default language instead.}%
\else
\language=\csname l@#1\endcsname
\fi
#2}}
\providecommand{\BIBdecl}{\relax}
\BIBdecl

\bibitem{kulkarni2012}
A.~A. Kulkarni and U.~V. Shanbhag, ``On the variational equilibrium as a
  refinement of the generalized {Nash} equilibrium,'' \emph{Automatica},
  vol.~48, no.~1, pp. 45--55, 2012.

\bibitem{ravat2011}
U.~Ravat and U.~V. Shanbhag, ``On the characterization of solution sets of
  smooth and nonsmooth convex stochastic {Nash} games,'' \emph{SIAM Journal on
  Optimization}, vol.~21, no.~3, pp. 1168--1199, 2011.

\bibitem{MerStaIFAC19}
M.~Staudigl and P.~Mertikopoulos, ``Convergent noisy forward-backward-forward
  algorithms in non-monotone variational inequalities,''
  \emph{IFAC-PapersOnLine}, vol.~52, no.~3, pp. 120--125, 2019.

\bibitem{yu2017}
C.-K. Yu, M.~Van Der~Schaar, and A.~H. Sayed, ``Distributed learning for
  stochastic generalized {Nash} equilibrium problems,'' \emph{IEEE Transactions
  on Signal Processing}, vol.~65, no.~15, pp. 3893--3908, 2017.

\bibitem{YeHu16}
M.~Ye and G.~Hu, ``Game design and analysis for price-based demand response: An
  aggregate game approach,'' \emph{IEEE transactions on cybernetics}, vol.~47,
  no.~3, pp. 720--730, 2016.

\bibitem{kannan13addressing}
\BIBentryALTinterwordspacing
A.~Kannan, U.~V. Shanbhag, and H.~M. Kim, ``Addressing supply-side risk in
  uncertain power markets: stochastic nash models, scalable algorithms and
  error analysis,'' \emph{Optim. Methods Softw.}, vol.~28, no.~5, pp.
  1095--1138, 2013. [Online]. Available:
  \url{https://doi.org/10.1080/10556788.2012.676756}
\BIBentrySTDinterwordspacing

\bibitem{watling2006}
D.~Watling, ``User equilibrium traffic network assignment with stochastic
  travel times and late arrival penalty,'' \emph{European Journal of
  Operational Research}, vol. 175, no.~3, pp. 1539--1556, 2006.

\bibitem{henrion2007}
R.~Henrion and W.~R{\"o}misch, ``On m-stationary points for a stochastic
  equilibrium problem under equilibrium constraints in electricity spot market
  modeling,'' \emph{Applications of Mathematics}, vol.~52, no.~6, pp. 473--494,
  2007.

\bibitem{abada2013}
I.~Abada, S.~Gabriel, V.~Briat, and O.~Massol, ``A generalized
  {Nash}--{Cournot} model for the {Northwestern} {European} natural gas markets
  with a fuel substitution demand function: The gammes model,'' \emph{Networks
  and Spatial Economics}, vol.~13, no.~1, pp. 1--42, 2013.

\bibitem{yi2019}
P.~Yi and L.~Pavel, ``An operator splitting approach for distributed
  generalized {Nash} equilibria computation,'' \emph{Automatica}, vol. 102, pp.
  111--121, 2019.

\bibitem{alvarez2001}
F.~Alvarez and H.~Attouch, ``An inertial proximal method for maximal monotone
  operators via discretization of a nonlinear oscillator with damping,''
  \emph{Set-Valued Analysis}, vol.~9, no.~1, pp. 3--11, 2001.

\bibitem{attouch2019}
H.~Attouch and A.~Cabot, ``Convergence of a relaxed inertial forward--backward
  algorithm for structured monotone inclusions,'' \emph{Applied Mathematics \&
  Optimization}, vol.~80, no.~3, pp. 547--598, 2019.

\bibitem{attouch2020}
------, ``Convergence of a relaxed inertial proximal algorithm for maximally
  monotone operators,'' \emph{Mathematical Programming}, vol. 184, no.~1, pp.
  243--287, 2020.

\bibitem{lorenz2015}
D.~A. Lorenz and T.~Pock, ``An inertial forward-backward algorithm for monotone
  inclusions,'' \emph{Journal of Mathematical Imaging and Vision}, vol.~51,
  no.~2, pp. 311--325, 2015.

\bibitem{Nes83}
Y.~Nesterov, ``A method of solving a convex programming problem with
  convergence rate $o(1/k^{2})$.'' \emph{Soviet Mathematics Doklady}, vol.~27,
  no.~2, pp. 372--376, 1983.

\bibitem{Bia16}
P.~Bianchi, ``Ergodic convergence of a stochastic proximal point algorithm,''
  \emph{SIAM Journal on Optimization}, vol.~26, no.~4, pp. 2235--2260, 2016.

\bibitem{rosasco2016in}
L.~Rosasco, S.~Villa, and B.~C. V{\~u}, ``A stochastic inertial
  forward--backward splitting algorithm for multivariate monotone inclusions,''
  \emph{Optimization}, vol.~65, no.~6, pp. 1293--1314, 2016.

\bibitem{rosasco2016}
------, ``Stochastic forward--backward splitting for monotone inclusions,''
  \emph{Journal of Optimization Theory and Applications}, vol. 169, no.~2, pp.
  388--406, 2016.

\bibitem{CuiSha20}
S.~Cui and U.~V. Shanbhag, ``Variance-reduced proximal and splitting schemes
  for monotone stochastic generalized equations,'' \emph{arXiv preprint
  arXiv:2008.11348}, 2020.

\bibitem{KanSha19}
A.~Kannan and U.~V. Shanbhag, ``Optimal stochastic extragradient schemes for
  pseudomonotone stochastic variational inequality problems and their
  variants,'' \emph{Computational Optimization and Applications}, vol.~74,
  no.~3, pp. 779--820, 2019.

\bibitem{SFBF}
R.~I. Bot, P.~Mertikopoulos, M.~Staudigl, and P.~T. Vuong, ``Mini-batch
  forward-backward-forward methods for solving stochastic variational
  inequalities,'' \emph{Forthcoming: Stochastic Systems}, 2021.

\bibitem{Tse00}
\BIBentryALTinterwordspacing
P.~Tseng, ``A modified forward-backward splitting method for maximal monotone
  mappings,'' \emph{SIAM Journal on Control and Optimization}, vol.~38, no.~2,
  pp. 431--446, 2018/09/13 2000. [Online]. Available:
  \url{https://doi.org/10.1137/S0363012998338806}
\BIBentrySTDinterwordspacing

\bibitem{BauCom16}
H.~H. Bauschke and P.~L. Combettes, \emph{Convex Analysis and Monotone Operator
  Theory in Hilbert Spaces}.\hskip 1em plus 0.5em minus 0.4em\relax Springer -
  CMS Books in Mathematics, 2016.

\bibitem{franci2019fbf}
B.~Franci, S.~Grammatico, and M.~Staudigl, ``Distributed forward-backward
  (half) forward algorithms for generalized {Nash} equilibrium seeking,'' in
  \emph{European Control Conference (ECC), St. Petersburg, Russia,}, 2020.

\bibitem{KulSha12}
\BIBentryALTinterwordspacing
A.~A. Kulkarni and U.~V. Shanbhag, ``On the variational equilibrium as a
  refinement of the generalized nash equilibrium,'' \emph{Automatica}, vol.~48,
  no.~1, pp. 45--55, 2012. [Online]. Available:
  \url{http://www.sciencedirect.com/science/article/pii/S0005109811004821}
\BIBentrySTDinterwordspacing

\bibitem{franci2020fbtac}
B.~Franci and S.~Grammatico, ``A distributed forward-backward algorithm for
  stochastic generalized nash equilibrium seeking,'' \emph{IEEE Transactions on
  Automatic Control}, 2020.

\bibitem{IusJofOliTho17}
A.~Iusem, A.~Jofr{\'e}, R.~I. Oliveira, and P.~Thompson, ``Extragradient method
  with variance reduction for stochastic variational inequalities,'' \emph{SIAM
  Journal on Optimization}, vol.~27, no.~2, pp. 686--724 

\bibitem{facchineikanzow2007}
F.~Facchinei and C.~Kanzow, ``Generalized nash equilibrium problems,''
  \emph{4or}, vol.~5, no.~3, pp. 173--210, 2007.

\bibitem{facchinei2007vi}
F.~Facchinei, A.~Fischer, and V.~Piccialli, ``On generalized {Nash} games and
  variational inequalities,'' \emph{Operations Research Letters}, vol.~35,
  no.~2, pp. 159--164, 2007.

\bibitem{FacPan03}
F.~Facchinei and J.-s. Pang, \emph{Finite-Dimensional Variational Inequalities
  and Complementarity Problems - Volume I and Volume II}.\hskip 1em plus 0.5em
  minus 0.4em\relax Springer Series in Operations Research, 2003.

\bibitem{auslender2000}
A.~Auslender and M.~Teboulle, ``Lagrangian duality and related multiplier
  methods for variational inequality problems,'' \emph{SIAM Journal on
  Optimization}, vol.~10, no.~4, pp. 1097--1115, 2000.

\bibitem{godsil2013}
C.~Godsil and G.~F. Royle, \emph{Algebraic graph theory}.\hskip 1em plus 0.5em
  minus 0.4em\relax Springer Science \& Business Media, 2013, vol. 207.

\bibitem{Byrd2012}
\BIBentryALTinterwordspacing
R.~H. Byrd, G.~M. Chin, J.~Nocedal, and Y.~Wu, ``Sample size selection in
  optimization methods for machine learning,'' \emph{Mathematical Programming},
  vol. 134, no.~1, pp. 127--155, 2012. [Online]. Available:
  \url{https://doi.org/10.1007/s10107-012-0572-5}
\BIBentrySTDinterwordspacing

\bibitem{lei2018distributed}
J.~Lei and U.~V. Shanbhag, ``Distributed variable sample-size gradient-response
  and best-response schemes for stochastic nash equilibrium problems over
  graphs,'' \emph{arXiv preprint arXiv:1811.11246}, 2018.

\bibitem{bot2020}
R.~Bot, P.~Mertikopoulos, M.~Staudigl, and P.~Vuong, ``Mini-batch
  forward-backward-forward methods for solving stochastic variational
  inequalities,'' \emph{Stochastic Systems}, 2020.

\bibitem{kannan2012}
A.~Kannan and U.~V. Shanbhag, ``Distributed computation of equilibria in
  monotone nash games via iterative regularization techniques,'' \emph{SIAM
  Journal on Optimization}, vol.~22, no.~4, pp. 1177--1205, 2012.

\bibitem{Pol87}
B.~T. Polyak, \emph{Introduction to Optimization}.\hskip 1em plus 0.5em minus
  0.4em\relax Optimization Software, 1987.

\end{thebibliography}

\end{document}